\newtheorem{theorem}{Theorem}[section]
\newtheorem{lemma}[theorem]{Lemma}
\newtheorem{proposition}[theorem]{Proposition}
\newtheorem{corollary}[theorem]{Corollary}
\newcommand{\be}{\begin{equation}}
 \newcommand{\ee}{\end{equation}}
\newcommand{\R}{\mathbb{R}}
\newcommand{\Leb}{{\mathscr L}}
\newcommand{\bs}{{\rm bs}}
\newcommand{\Prob}{\mathcal{P}}
\DeclareMathOperator{\Cb}{C_b}
\DeclareMathOperator{\Cbs}{C_\bs}
\DeclareMathOperator{\Lipb}{Lip_b}
\newcommand{\Ch}{{\sf Ch}}
\DeclareMathOperator{\dive}{div}
\newcommand{\meas}{\mathfrak{m}}
\DeclareMathOperator{\RCD}{RCD}
\DeclareMathOperator{\CD}{CD}
\newfont{\tmpf}{cmsy10 scaled 2500}
\begin{document}

\title[Continuity of Laplacian]{Continuity of nonlinear eigenvalues in $\CD(K,\infty)$ spaces with\\
respect to measured Gromov-Hausdorff convergence}

\author{Luigi Ambrosio
\and
Shouhei Honda
\and
Jacobus W. Portegies
} 

\keywords{}



\thanks{Scuola Normale Superiore, \url{luigi.ambrosio@sns.it}}
\thanks{Tohoku University, \url{shonda@m.tohoku.ac.jp}}
\thanks{Eindhoven University, \url{j.w.portegies@tue.nl}}
\maketitle

\begin{abstract}
In this note we prove in the nonlinear setting of $\CD(K,\infty)$ spaces the stability of the Krasnoselskii spectrum of the Laplace operator 
$-\Delta$ under measured Gromov-Hausdorff convergence, under an additional compactness assumption satisfied, for instance, by
sequences of $\CD^*(K,N)$ metric measure spaces with uniformly bounded diameter. Additionally, we show that every element $\lambda$ in the Krasnoselskii spectrum is indeed an eigenvalue, namely there exists a nontrivial $u$ satisfying the eigenvalue equation $- \Delta u = \lambda u$.
\end{abstract}

\tableofcontents

\section{Introduction}

Given a smooth, closed, $n$-dimensional Riemannian manifold $M$, the Laplace-Beltrami 
operator is a self-adjoint operator on $L^2(M)$. Its associated quadratic form is the Dirichlet energy 
\[
\Ch(u) = \int_M |\nabla u|^2 d \mathcal{H}^n.
\]
Its spectrum is discrete and only consists of eigenvalues, that is values $\lambda \geq 0$ such that the equation 
\[
- \Delta u = \lambda u
\]
has a solution. These eigenvalues can be ordered, $0 = \lambda_1(M) < \lambda_2(M) \leq \lambda_3(M) \leq \dots$, (eigenvalues are repeated according to their multiplicity) and they can be found by the min-max formula
\begin{equation}
\label{eq:min-max-linear}
\lambda_k(M) := \inf_{\dim L \geq k} \sup_{v \in S(L)} \Ch(v),
\end{equation}
where the infimum is over subspaces $L$ of $L^2(M)$ of dimension at least $k$, and $S(L)$ denotes the unit sphere in $L$.

An easier version of the question that we consider in this note is the following: If a sequence of closed Riemannian manifolds $M_i$ converges to a limit space $X$, do the eigenvalues $\lambda_k(M_i)$ converge to a $\lambda_k(X)$
(suitably defined, if $X$ is not smooth)? Under Gromov-Hausdorff convergence, simple examples show that this is not true in general. In fact, this led Fukaya to the introduction of 
the concept of metric measure convergence, or measured Gromov-Hausdorff convergence \cite{Fukaya-Collapsing-1987}. 
Given uniform two-sided bounds on the sectional curvature, and a uniform upper bound on the diameter, the spectrum of the Laplace operator is indeed continuous under measured Gromov-Hausdorff convergence. 
Later, Cheeger and Colding \cite{Cheeger-Structure-III-2000} extended this continuity result to a setting of Riemannian manifolds with a uniform Ricci curvature lower bound and a diameter upper bound.
Without a curvature bound, the eigenvalues can only be guaranteed to be upper semi-continuous with respect to measured Gromov-Hausdorff convergence \cite{Fukaya-Collapsing-1987}.

In general, the limit spaces $X$ are not Riemannian manifolds. An important part of the results by Fukaya and Cheeger and Colding is that one can define a (nontrivial) Laplace operator on such limit spaces.
The Laplace operator is introduced through its quadratic form, by now usually referred to as the Cheeger energy.

The limit spaces fail to have regularity properties that allow for an analytic definition of the Ricci curvature. Yet, they inherit many properties (such as a Bishop-Gromov volume comparison theorem) from their approximating Riemannian manifolds, that are consequences of the assumed Ricci curvature lower bound. Soon after the results by Cheeger and Colding on the structure of Ricci limit spaces, Lott and Villani, and Sturm introduced a synthetic notion of a lower Ricci curvature bound \cite{Lott-Ricci-2009,Sturm-Geometry-I-2006,Sturm-Geometry-II-2006}, the so-called $\CD(K,\infty)$ condition, that can be stated for general metric measure spaces. This condition implies many of the properties that in the smooth case are a consequence of the Ricci curvature lower bound.

The first author, Gigli and Savar\'{e} showed how the Cheeger energy also induces a non-trivial Laplace operator on $\CD(K,\infty)$ spaces \cite{Ambrosio-Gigli-Savare11}. In general, however, the Laplace operator and the associated heat flow are non-linear. 
Indeed, a smooth compact Finsler manifold $(M,F)$ is an example of a $\CD(K,\infty)$ space, for some $K\in  \mathbb{R}$, and in this case the Cheeger energy agrees with the energy introduced by Shen in \cite{Shen-Non-Linear-1998}
\[
\Ch(u) = \int_M F^*( du )^2 d \mathcal{H}^n,
\]
where $F^*$ is the dual Finsler norm.
In this case the Laplace operator $-\Delta$ is defined as the $L^2$-gradient of the halved energy, that is 
\[
\int_M (-\Delta u) v d\meas = D\frac 12 \Ch(u)(v)
\]
for all $v \in L^2(\meas)$. This Laplace operator is linear if and only if the Finsler manifold is in fact Riemannian.

To rule out Finsler geometries, Gigli, Savar\'{e} and the first author defined $\RCD(K,\infty)$ spaces as $\CD(K,\infty)$ spaces for which the heat flow (and equivalently the Laplace operator) are linear \cite{Ambrosio-Gigli-Savare14}. 

A next natural question was whether the same continuity properties for the spectrum also hold for $\RCD(K,\infty)$ spaces. 
In \cite{Gigli-Convergence-2015}, Gigli, Mondino and Savar\'{e} proved the spectral stability under measured Gromov-Hausdorff convergence for $\RCD(K,\infty)$ spaces. 
The main ingredient is the Mosco convergence of the Cheeger energies on the approximating spaces to the Cheeger energy on the limit space.
Recently, the first and second author proved the Mosco convergence of the $p$-Cheeger energy and the continuity of the first eigenvalue of the $p$-Laplacian on $\RCD(K,\infty)$ spaces \cite{Ambrosio-Honda16}.

The purpose of this note is to extend the continuity result of Gigli, Mondino and Savar\'{e} to the setting of $\CD(K,\infty)$ spaces. 
However, since just as in the case for Finsler manifolds the Laplace operator is in general nonlinear, we should specify what we mean by eigenvalues.

We will say that $u$ is an eigenfunction and $\lambda$ is an eigenvalue if  they satisfy the eigenvalue equation
\[
- \Delta u  = \lambda u.
\]
We will recall the precise meaning of the Laplace operator in the next section.
We mentioned at the beginning of the introduction that for Riemannian manifolds, all eigenvalues $\lambda_k$ can be found through the min-max formula (\ref{eq:min-max-linear}). Even though for Finsler manifolds the numbers $\lambda_k$ are still invariants, they do not necessarily correspond to values $\lambda$ for which the eigenvalue equation $-\Delta u = \lambda u$ has a solution.

On the other hand, in the case of Finsler manifolds, eigenvalues still exactly correspond to critical values of the Cheeger energy restricted to the unit sphere. Moreover, because the Cheeger energy is even, the eigenvalues correspond to critical values of the (normalized) energy restricted to $\mathbb{RP}^{\infty} \subset H$. The topology of $\mathbb{RP}^{\infty}$ can then then be leveraged in a Morse-theoretic or mountain-pass approach to finding critical points.

In particular, from now on we define for $k \in \mathbb{N}$
\begin{equation}
\label{eq:min-max-nonlinear}
\lambda_k := \inf_{\gamma(V) \geq k} \sup_{v \in V} \Ch(v)
\end{equation}
where now the infimum is over the collection of compact, symmetric subsets $V$ of the unit sphere in $L^2(\meas)$ that have Krasnoselskii genus at least $k$ \cite{Krasnoselskii-Topological-1964}, see also \cite{Rabinowitz-Aspects-1971,Struwe-Variational-2008}. 
One can think of the Krasnoselskii genus as the ``essential dimension" of a subset. 
Gromov discusses a slightly different way to assign an essential dimension \cite{Gromov-Dimension-1988}.
For the subsets considered, the Krasnoselskii genus corresponds to the Lusternik-Schnirelmann category of the sets in projective space.
Equivalently, the minimization in the min-max problem (\ref{eq:min-max-nonlinear}) is over subsets in projective space that have Lusternik-Schnirelmann category at least $k$. 

This Morse-theoretic approach to finding critical points has many applications, such as the classical result by Lusternik and Schnirelmann on the existence of three distinct simple closed geodesics on a Riemannian manifold with the topology of a sphere \cite{Lusternik-Probleme-1929,Ballmann-Satz-1978},  the existence of minimal surfaces in Riemannian manifolds \cite{Pitts-Existence-1981} and the existence of infinitely many solutions to semi-linear elliptic equations, see for instance \cite{Hempel-Multiple-1970}. 
For a list of important references we refer to Struwe \cite{Struwe-Variational-2008}.
 
Since it is so close to the topic of this paper, let us single out the following result. 
In the context of the $p$-Laplacian on a bounded domain $\Omega$ in $\mathbb{R}^n$, Szulkin \cite{Szulkin-Ljusternik-1988} proved the existence of infinitely many pairs $(\lambda, u)$, satisfying the system
\[
\begin{cases}
- \dive ( |\nabla u |^{p-2} u ) = \lambda |u|^{p-2} u & \text{in } \Omega\\
u = 0 & \text{on } \partial \Omega\\
\frac{1}{p} \int |\nabla u|^p =1.
\end{cases}
\]

In fact, Szulkin's result is an application of a more general framework that he sets up. One might wonder if this framework, or the framework described by Struwe \cite{Struwe-Variational-2008} is general enough to encompass our setting, and therefore implies immediately that the $\lambda_k$'s defined through the min-max problem (\ref{eq:min-max-nonlinear}) are eigenvalues.

In the context of smooth Finsler manifolds, it was noted by Shen that this general framework can indeed be applied, and that the $\lambda_k$ as defined by the min-max problem (\ref{eq:min-max-nonlinear}) are eigenvalues of the Laplace operator \cite{Shen-Non-Linear-1998}.  However, in general, there may be eigenvalues $\lambda$ for which there is no $k$ such that $\lambda = \lambda_k$.
This framework requires the ``energy" to be of class $C^1$. The Cheeger energy generally does not have this regularity in $\CD(K,\infty)$ spaces. For this reason, we need to modify the standard arguments. In particular, rather than constructing a pseudo-gradient flow, we directly work with the gradient flow that is provided by the general Brezis-Komura theory of gradient flows on Hilbert spaces.

The two main results of this note are the following.
\begin{itemize}
\item We show in Theorem \ref{th:EigenvalueExistence} that when the sublevel sets of the Cheeger energy $\Ch$ are compact, the values $\lambda_k$ correspond to eigenvalues of $\Ch$. That is, there exists a nontrivial function $u_k \in L^2(\meas)$ such that
\[
- \Delta u_k = \lambda_k u_k.
\]
In fact, we also include a statement about the multiplicity of such eigenvalues.
\item We show in Theorem \ref{tmain} that when a sequence of $\CD(K,\infty)$ spaces converges in the measured Gromov-Hausdorff sense to a limit $\CD(K,\infty)$ space, the Krasnoselskii eigenvalues $\lambda_k$ 
converge to those on the limit space. 
\end{itemize}

The additional difficulty in this nonlinear context for proving the stability of the eigenvalues comes from the fact that Mosco convergence alone is not sufficient to prove this stability.
In the linear context, it suffices to approximate a finite number of functions on the limit space, namely the eigenfunctions, in a way that is guaranteed by Mosco convergence. However, in the nonlinear context, a whole family of functions needs to approximated in a continuous fashion to get the necessary estimates.

Finally, we conclude by pointing out some potential extensions.
Since the setting of the paper is nonlinear, it would be interesting to investigate also the continuity of the spectrum in the case
of the $p$-Cheeger energies $\Ch_p$, even in the case of $\RCD(K,\infty)$ spaces. 
Additionally, it is still an open question whether the values in the Krasnoselskii spectrum for the $p$-Cheeger energy, $p \neq 2$, are eigenvalues.

It would also be interesting to extend our results from probability to $\sigma$-finite measures, since most
of the results we use (in particular those in \cite{Gigli-Convergence-2015}) are already available in this more general setting.

\smallskip\noindent
{\bf Acknowledgements.} The first author acknowledges the support of the MIUR PRIN 2015 grant. The second author
acknowledges the support of the 
JSPS Program for Advancing Strategic International Networks to Accelerate the Circulation of Talented Researchers, the Grantin-Aid for Young Scientists (B) 16K17585 and the warm hospitality of SNS. The third author thanks Mark Peletier, 
Georg Prokert and Oliver Tse for helpful discussions and the SNS for its hospitality.

\section{Notation and preliminary results}

Throughout this paper, a \emph{metric measure space} is a triple $(X,d,\meas)$, where $(X,d)$
is a complete and separable metric space and $\meas$ is a Borel probability measure in $X$ with ${\rm supp\,}\meas=X$. 
We denote by $\Cb(X)$ (resp. $\Cbs(X)$) the space of bounded continuous (resp. bounded continuous
with bounded support) functions in $X$. Analogously, we denote by $\Lipb(X)$ the space of bounded
Lipschitz functions on $X$.

In our setting, we are dealing with a sequence $(\meas_i)$  of probability measures
weakly convergent to a probability measure $\meas$ in a metric space $(Z,d)$, 
namely in duality with $\Cb(Z)$.

Assuming that $f_i$ in suitable Lebesgue spaces relative to $\meas_i$ are given, we recall the notions
of weak and strong convergence for $f_i$, see also \cite{Honda2},
\cite{Gigli-Convergence-2015} and \cite{Ambrosio-Stra-Trevisan} for many more properties of the
weak/strong convergence across variable measure spaces.

\smallskip\noindent
{\bf $L^p$-weak convergence.} Let $p\in (1,\infty)$. We say that $f_i\in L^p(\meas_i)$ 
$L^p$-weakly converge to $f \in L^p(\meas)$ if $f_i\meas_i$ weakly converge to $f \meas$ in 
duality with $\Cb(Z)$, with
\begin{equation}\label{eq:bound-lp-norms} 
\sup_i \left\| f_i \right\|_{L^p(\meas_i)} < \infty.
\end{equation}

It is not difficult to prove that any sequence $(f_i)$ satisfying \eqref{eq:bound-lp-norms} has a $L^p$-weakly convergent
subsequence.
 
\smallskip\noindent
{\bf $L^p$-strong convergence.} Let $p\in (1,\infty)$. We say that $f_i\in L^p(\meas_i)$  
$L^p$-strongly converge to $f\in L^p(\meas)$ if, in addition to weak $L^p$-convergence, one has 
$\limsup_i\|f_i\|_{L^p(\meas_i)}\le\|f\|_{L^p(\meas)}$. 

It is easy to check that if $f_i$ $L^p$-strongly converge to $f$ and $g_i$ $L^q$-weakly converge to $g$, with
$q=p/(p-1)$, then 
$$
\lim_{i\to\infty}\int f_ig_i d\meas_i=\int fg d\meas.
$$

\smallskip\noindent
{\bf Slopes, subdifferentials and gradient flows of $\lambda$-convex functionals in Hilbert spaces.} Let 
$(H,\langle\cdot,\cdot\rangle)$ be a Hilbert space with norm $|\cdot|$ 
(in our case it will always be a Lebesgue $L^2$ space) and let $\Phi:H\to (-\infty,\infty]$. In the case when $\Phi$ is convex, a relevant
concept is the subdifferential $\partial\Phi(u)$, a closed and convex set (possibly empty) defined at all points $u\in \{\Phi<\infty\}$ by
$$
\partial\Phi(u):=
\left\{\xi\in H:\ \Phi(v)\geq\Phi(u)+\langle \xi,v-u\rangle\,\,\,\forall v\in H\right\}.
$$
 
For $\lambda\in\R$, we say that $\Phi$ is 
$\lambda$-convex if $\Phi-\frac\lambda 2|\cdot|^2$ is convex in $H$. The descending slope of $\Phi$, defined by
\[
|\partial\Phi|(v) := \limsup_{w \to v} \frac{\left(\Phi(v) - \Phi(w) \right)^+}{|v-w|} 
\] 
admits, thanks to $\lambda$-convexity, the representation \cite[Thm.~2.4.9]{Ambrosio-Gigli-Savare-05}
\begin{equation}\label{eq:RepresentationLocalSlope}
|\partial\Phi|(v) = \sup_{w \neq v} \left( \frac{\Phi(v)-\Phi(w)}{|v-w|}+\frac\lambda 2 |v-w|\right)^+.
\end{equation}
It follows immediately from \cite[Cor.~2.4.10]{Ambrosio-Gigli-Savare-05} that the descending slope of
$\lambda$-convex functionals is lower semicontinuous.
Another equivalent representation is (with the convention $\min\emptyset=\infty$)
\begin{equation}\label{eq:RepresentationLocalSlop2}
|\partial\Phi|(v) = \min\left\{|\xi|:\ \xi\in \partial_F\Phi(v)\right\}
\end{equation}
where $\partial_F\Phi$ is the \textit{Fr\'echet} subdifferential of $\Phi$ at $u$:
\begin{equation}\label{def:frediff}
\partial_F\Phi(u)=\left\{\xi\in H:\ 
\liminf_{t\to 0^+}\frac{\Phi(u+tv)-\Phi(u)}{t}\geq\int \xi v d\meas\quad\forall v\in H\right\}. 
\end{equation}
Notice that for convex functions $\Phi$, monotonicity of difference quotients yields $\partial_F\Phi=\partial\Phi$; more generally,
for a $\lambda$-convex $\Phi$, one has
\begin{equation}\label{def:frediff2}
\xi\in\partial_F\Phi(u)\quad\Longleftrightarrow\quad
\Phi(v)\geq\Phi(u)+\langle \xi,v-u\rangle+\frac\lambda 2|v-u|^2\,\,\,\forall v\in H.
\end{equation} 

If $\Phi:H\to (-\infty,\infty]$ is $\lambda$-convex and lower semicontinuous, the Brezis-Komura theory provides
existence and many more properties of the gradient flow $u(t)$ of $\Phi$ starting from $u$, namely the locally 
absolutely continuous map $u(t):(0,\infty)\to H$ such that $u(t)\to u$ as $t\to 0$ and 
$$
-u'(t)\in\partial_F \Phi(u(t))\qquad\text{for $\Leb^1$-a.e. $t>0$.}
$$
Equivalently, for $\lambda$-convex $\Phi$'s, \eqref{def:frediff2} can be used to show that 
the gradient flow can be characterized in terms of the \textit{evolution variational inequality}
\begin{equation}\label{eq:EVI}
\frac{d}{dt}\frac 12 |u(t)-v|^2\leq \Phi(v)-\Phi(u(t))-\frac\lambda 2|v-u(t)|^2
\qquad\text{for $\Leb^1$-a.e. $t>0$.}
\end{equation}
A systematic account of the theory can be found in \cite{Brezis}, we also quote \cite{Ambrosio-Gigli-Savare-05} for extensions of the theory
to the metric setting, based either on \eqref{eq:EVI} or on the energy dissipation points of view. 
We record in the following theorem the main properties of gradient flows we need.

\begin{theorem}
Assume that $\Phi:H\to (-\infty,\infty]$ is $\lambda$-convex and lower semicontinuous.
Then for all $u\in \overline{\{\Phi<\infty\}}$ there exists a unique gradient flow starting from $u$. The
induced semigroup ${\sf S}_t$ satisfies the following properties:
\begin{itemize}
\item[(1)] (contractivity and monotonicity) For all $t\geq 0$ one has
\begin{equation}\label{eq:contractivity}
\|{\sf S}_t u-{\sf S}_t v\|\leq e^{-\lambda t}\|u-v\|\qquad u,\,v\in\overline{\{\Phi<\infty\}},
\end{equation}
and $t\mapsto\Phi(u(t))$, $t\mapsto e^{\lambda t}|\partial\Phi|(u(t))$ are nonincreasing in $[0,\infty)$.
\item[(2)] (energy regularization) For all $t> 0$ one has (with the convention $(e^{\lambda t}-1)/t=1$ if $\lambda=0$)
\begin{equation}\label{eq:regularizing1}
\Phi(u(t))\leq \inf_{v\in H}\left\{ \Phi(v)+\frac{|u-v|^2}{2(e^{\lambda t}-1)/\lambda}\right\}.
\end{equation}
\item[(3)] (slope regularization) For all $t> 0$ one has
\begin{equation}\label{eq:regularizing2}
e^{-2\lambda^-t}|\partial\Phi|^2(u(t))\leq\inf_{v\in H}\left\{|\partial\Phi|^2(v)+
\frac{|u-v|^2}{t^2}+\frac{\lambda^-}{t^2}\bigl(\int_0^t|v-u(s)|^2 ds+ t|v-u(t)|^2\bigr)
\right\} .
\end{equation}
\item[(4)] (minimal selection) For $\Leb^1$-a.e. $t>0$ one has that $-u'(t)$ is the element with minimal
norm in $\partial_F\Phi(u(t))$.
\item[(5)] (energy identity) If $\Phi(u)<\infty$, then $t\mapsto\Phi(u(t))$ is locally absolutely continuous in $[0,\infty)$, with
$$
-\frac{d}{dt}\Phi(u(t))=|u'(t)|^2=|\partial\Phi|^2(u(t))\qquad\text{for $\Leb^1$-a.e. $t>0$.}
$$
\end{itemize}
\end{theorem}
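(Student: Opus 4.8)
\section*{Proof proposal}

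The plan is to derive all five items from the classical Brezis--Komura theory, treating the $\lambda$-convexity as a bounded linear perturbation of the convex case. Set $\Psi:=\Phi-\frac\lambda2|\cdot|^2$, which by hypothesis is proper, convex and lower semicontinuous, with $\{\Psi<\infty\}=\{\Phi<\infty\}$ and hence $\overline{D(\partial\Psi)}=\overline{\{\Phi<\infty\}}$. From the characterization \eqref{def:frediff2} one reads off the operator identity $\partial_F\Phi(u)=\partial\Psi(u)+\lambda u$, so that $A:=\partial\Psi$ is maximal monotone and the gradient flow equation $-u'(t)\in\partial_F\Phi(u(t))$ becomes $-u'(t)\in A(u(t))+\lambda u(t)$, i.e.\ a globally Lipschitz perturbation of a subgradient flow. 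The Brezis--Komura theory for $\lambda=0$ together with the perturbation and rescaling arguments in \cite[Ch.~2--4]{Ambrosio-Gigli-Savare-05} then yield, for every $u\in\overline{\{\Phi<\infty\}}$, a unique locally absolutely continuous $u(t):(0,\infty)\to H$ with $u(t)\to u$ as $t\to0^+$, satisfying $u(t)\in D(A)$ for all $t>0$ and $-u'(t)=$ the element of minimal norm in $\partial_F\Phi(u(t))$ for a.e.\ $t$; this establishes existence, uniqueness, and item (4). Combining item (4) with \eqref{eq:RepresentationLocalSlop2} gives the identity $|u'(t)|=|\partial\Phi|(u(t))$ for a.e.\ $t>0$, which is used throughout.

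For item (1), let $u(t)={\sf S}_tu$ and $v(t)={\sf S}_tv$. Differentiating and using the monotonicity of $A$ on $-u'-\lambda u\in A(u)$ and $-v'-\lambda v\in A(v)$ gives $\frac{d}{dt}\tfrac12|u(t)-v(t)|^2\le-\lambda|u(t)-v(t)|^2$ for a.e.\ $t$, and Gr\"onwall's inequality yields \eqref{eq:contractivity}. Next we prove the energy identity (5): when $\Phi(u)<\infty$, $t\mapsto\Phi(u(t))$ is locally absolutely continuous on $[0,\infty)$ and the chain rule for $\lambda$-convex functionals along absolutely continuous curves gives, for a.e.\ $t$, $\frac{d}{dt}\Phi(u(t))=\langle\xi,u'(t)\rangle$ for every $\xi\in\partial_F\Phi(u(t))$; taking $\xi=-u'(t)$ and invoking $|u'(t)|=|\partial\Phi|(u(t))$ gives $-\frac{d}{dt}\Phi(u(t))=|u'(t)|^2=|\partial\Phi|^2(u(t))$. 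In particular $t\mapsto\Phi(u(t))$ is nonincreasing, and the general case $\Phi(u)=\infty$ follows from the semigroup property and item (2) below, which forces $\Phi(u(t_0))<\infty$ for every $t_0>0$. Finally, the semigroup identities $u(t+h)={\sf S}_{t-s}(u(s+h))$ and $u(t)={\sf S}_{t-s}(u(s))$, combined with \eqref{eq:contractivity}, give $|u(t+h)-u(t)|\le e^{-\lambda(t-s)}|u(s+h)-u(s)|$ for $s<t$; dividing by $h$ and letting $h\to0^+$ yields $e^{\lambda t}|u'(t)|\le e^{\lambda s}|u'(s)|$ for a.e.\ $s<t$, and the lower semicontinuity of $|\partial\Phi|$ (together with $|u'|=|\partial\Phi|(u)$) upgrades this to the monotonicity of $t\mapsto e^{\lambda t}|\partial\Phi|(u(t))$ on all of $[0,\infty)$, the value at $t=0$ being possibly $+\infty$.

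Item (2) follows from the evolution variational inequality \eqref{eq:EVI}. Fix $v$ with $\Phi(v)<\infty$ and set $\phi(s):=\tfrac12|u(s)-v|^2$. Then \eqref{eq:EVI} reads $\phi'(s)+\lambda\phi(s)\le\Phi(v)-\Phi(u(s))\le\Phi(v)-\Phi(u(t))$ for $s\le t$, using the monotonicity of $s\mapsto\Phi(u(s))$ established above; hence $\frac{d}{ds}\bigl(e^{\lambda s}\phi(s)\bigr)\le e^{\lambda s}\bigl(\Phi(v)-\Phi(u(t))\bigr)$. Integrating over $(0,t)$ and discarding the nonnegative term $e^{\lambda t}\phi(t)$ gives $\Phi(u(t))\le\Phi(v)+|u-v|^2\big/\bigl(2(e^{\lambda t}-1)/\lambda\bigr)$; taking the infimum over such $v$, and using the stated convention when $\lambda=0$, proves \eqref{eq:regularizing1}.

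The remaining item (3) is the only genuinely delicate point. When $v\notin D(\partial_F\Phi)$ the right-hand side of \eqref{eq:regularizing2} is $+\infty$ by \eqref{eq:RepresentationLocalSlop2}, so we may assume $v\in D(\partial_F\Phi)$ and choose $\eta\in\partial_F\Phi(v)$ with $|\eta|=|\partial\Phi|(v)$. The estimate is then obtained by integrating in $s\in(0,t)$ the a priori bound $|u'(t)|\le e^{\lambda^-(t-s)}|u'(s)|$ (a consequence of the monotonicity of $e^{\lambda s}|\partial\Phi|(u(s))$), using the energy identity $\int_0^t|u'(s)|^2\,ds=\Phi(u)-\Phi(u(t))$, and comparing the flow with the competitor $v$ via \eqref{eq:EVI} together with the $\lambda$-convexity inequality $\Phi(u(s))\ge\Phi(v)+\langle\eta,u(s)-v\rangle+\frac\lambda2|u(s)-v|^2$. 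The careful bookkeeping of the $\lambda^-$-terms, present only when $\lambda<0$, is precisely the computation carried out in \cite[Thm.~3.1.6 and \S3.2]{Ambrosio-Gigli-Savare-05}, which we follow. I expect this last estimate to be the main obstacle; the other four items are a reorganization of well-documented facts from \cite{Brezis} and \cite{Ambrosio-Gigli-Savare-05}.
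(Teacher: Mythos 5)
Your treatment of items (1), (2), (4) and (5) is sound and is essentially what the paper does as well: the paper does not prove these items, deferring them to the Brezis--Komura theory and to \cite{Ambrosio-Gigli-Savare-05}, and your reduction $\Phi=\Psi+\frac\lambda2|\cdot|^2$ with $\partial_F\Phi=\partial\Psi+\lambda\,{\rm Id}$, the Gr\"onwall argument for \eqref{eq:contractivity}, and the EVI-based derivation of \eqref{eq:regularizing1} are all correct (modulo a mild circularity between your proof of (2) and the monotonicity of $\Phi(u(t))$, which is easily resolved by the standard instantaneous regularization $u(t)\in D(\partial_F\Phi)$ for $t>0$).

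The problem is item (3), which is the only item the paper actually proves, precisely because --- as the paper points out --- \cite[Thm.~4.3.2]{Ambrosio-Gigli-Savare-05} contains the slope regularization only for $\lambda=0$. Your proposal does not carry out this adaptation: you list plausible ingredients and then outsource ``the careful bookkeeping of the $\lambda^-$-terms'' to \cite[Thm.~3.1.6 and \S3.2]{Ambrosio-Gigli-Savare-05}, which is not where such a statement lives, and in the needed $\lambda$-convex form it is not in the reference at all. Moreover, the route you sketch does not assemble into \eqref{eq:regularizing2}: using the unweighted energy identity $\int_0^t|u'(s)|^2\,ds=\Phi(u)-\Phi(u(t))$ brings $\Phi(u)$ into the estimate (possibly $+\infty$, and in any case absent from \eqref{eq:regularizing2}) and produces no $t^{-2}$ structure. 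The missing device is the time-weighted integration: multiply the monotonicity bound $e^{\lambda s}|\partial\Phi|(u(s))\ge e^{\lambda t}|\partial\Phi|(u(t))$ by $s$, integrate and integrate by parts against the energy identity to get $\frac{t^2}{2}e^{-2\lambda^-t}|\partial\Phi|^2(u(t))\le\int_0^t\Phi(u(s))\,ds-t\Phi(u(t))$ with no reference to $\Phi(u)$; then bound $\int_0^t\Phi(u(s))\,ds$ by the integrated form of \eqref{eq:EVI} (this is where $\frac{\lambda^-}{2}\int_0^t|v-u(s)|^2\,ds$ enters), bound $\Phi(v)-\Phi(u(t))$ by $|\partial\Phi|(v)\,|v-u(t)|+\frac{\lambda^-}{2}|v-u(t)|^2$ via \eqref{eq:RepresentationLocalSlope}, and finally use Young's inequality to absorb $t|\partial\Phi|(v)\,|v-u(t)|$ against the leftover $-\frac12|u(t)-v|^2$ from the EVI integration. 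Without these steps --- which are the substance of the paper's proof --- your argument for (3) is a citation to a result that does not exist in the cited form, not a proof.
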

\begin{proof} For the reader's convenience we provide the proof of \eqref{eq:regularizing2}, adapting 
\cite[Thm.~4.3.2]{Ambrosio-Gigli-Savare-05}, where the statement is given only for $\lambda=0$ (while \eqref{eq:regularizing1} is
fully proved therein). 

By the monotonicity of $e^{\lambda t}|\partial\Phi|(u(t))$, integrating in time, we get
  \begin{eqnarray*}
    \frac{t^2e^{2\lambda t}}2|\partial\Phi|^2(u(t))&\le&
    \int_0^t s e^{2\lambda s}|\partial\Phi|^2(u(s))ds\le
    -e^{2\lambda^+t}\int_0^t s\big(\Phi(u(s)\big)'ds\\&\le&
    e^{2\lambda^+t}\biggl[\int_0^t\Phi(u(s)) ds-t\Phi(u(t))\biggr],
  \end{eqnarray*}
  so that
  $$
  \frac{t^2e^{-2\lambda^- t}}2|\partial\Phi|^2(u(t))\le\int_0^t\Phi(u(s)) ds-t\Phi(u(t)).
  $$
  Now we use the inequality
  $$
  \int_0^t\Phi(u(s))\,ds\leq t\Phi(v)-\frac 12|u(t)-v|^2+\frac{|u-v|^2}{2}+\frac{\lambda^-}2\int_0^t|v-u(s)|^2 ds
  $$
  that comes from integration of \eqref{eq:EVI}, as well as the inequality 
  $$
  \Phi(v)-\Phi(u(t))\leq |\partial\Phi|(v)|v-u(t)|+\frac{\lambda^-}{2}|v-u(t)|^2
  $$
  that comes from \eqref{eq:RepresentationLocalSlope}, to get
  $$
  \frac{t^2e^{-2\lambda^- t}}2|\partial\Phi|^2(u(t))\le t|\partial\Phi|(v)|v-u(t)|
  -\frac 12|u(t)-v|^2+\frac{|u-v|^2}{2}+\frac{\lambda^-}2\bigl(\int_0^t|v-u(s)|^2 ds+ t|v-u(t)|^2\bigr).
  $$
  Eventually with Young's inequality we conclude.
  \end{proof}

\smallskip\noindent
{\bf Cheeger energies and heat flow.}
We recall basic facts about Cheeger energies and heat flow in metric measure spaces
$(X,d,\meas)$, see \cite{Ambrosio-Gigli-Savare11} 
and \cite{Gigli-Convergence-2015} for a more systematic treatment of this topic. For
$p\in (1,\infty)$ the $p$-Cheeger energy
$\Ch_p:L^p(\meas)\to [0,\infty]$ is the convex and $L^p(\meas)$-lower semicontinuous functional defined as follows:
\begin{equation}\label{eq:defchp}
\Ch_p(f):=\inf\left\{\liminf_{n\to\infty}\int |\nabla f_n|^pd\meas:\ \text{$f_n\in\Lipb(X)$, $\|f_n-f\|_p\to 0$}\right\},
\end{equation}
where $|\nabla f|$ denotes the slope, also called local Lipschitz constant (notice that we drop the factor
$p^{-1}$ in front of the integral, used in other papers on this topic).
The case $p=2$ plays an important role in the axiomatization of the so-called $\RCD(K,\infty)$ spaces
\cite{Ambrosio-Gigli-Savare14} and in the construction of the differentiable structure, see \cite{Gigli1}.
For this reason we use the disinguished notation $\Ch=\Ch_2$ and denote by $D(\Ch)$ its finiteness domain.
  
Another object canonically associated to $\Ch$ and then to the metric measure structure is the heat flow $h_t$, defined as the
$L^2(\meas)$ gradient flow of $\frac 12\Ch$, according to the above mentioned Brezis-Komura theory of gradient flows of lower semicontinuous
and convex functionals in Hilbert spaces. This theory provides a continuous contraction
semigroup $h_t$ in $L^2(\meas)$ with the Markov property, characterized by
$$
\frac{d}{dt}h_t f=\Delta h_tf\quad\text{in $L^2(\meas)$, for a.e. $t>0$},\qquad\lim_{t\to 0^+}h_t f=f
$$
for all $f\in L^2(\meas)$, where $-\Delta g$ is the element with minimal $L^2(\meas)$ norm in $\partial\tfrac 12\Ch(g)$.
We shall also use that, because of the $2$-homogeneity of $\Ch$, one has
 (see \cite[Prop.~4.15]{Ambrosio-Gigli-Savare11} for a proof when $\xi=-\Delta f$, the same proof
works with any $\xi\in\partial\Ch(f)$)
\begin{equation}\label{eq:nicelaplacian}
\Ch(f)=\int \xi fd\meas
\qquad\forall \xi\in\partial\tfrac{1}{2}\Ch(f).
\end{equation}

We shall also extensively use the typical regularizing properties which follow by \eqref{eq:regularizing1} and
\eqref{eq:regularizing2} (with $\lambda=0$)
\begin{equation}\label{eq:Brezis1}
\Ch(h_t f)\leq\frac{\|f\|_{L^2(X,\meas)}^2}{t},
\end{equation}
\begin{equation}\label{eq:Brezis2}
\|\Delta h_t f\|_{L^2(\meas)}^2\leq\frac{\|f\|_{L^2(\meas)}^2}{t^2}
\end{equation}
as well as the monotonicity property $\Ch(h_sf)\leq\Ch(h_t f)\leq\Ch(f)$ for $0\leq t\leq s$.

\smallskip
\noindent
{\bf $\CD(K,\infty)$ spaces.} Denote by $\Prob(X)$ the class of Borel probability measures in $(X,d)$
and set
$$
\Prob_2(X):=\left\{\mu\in\Prob(X):\ \int d^2(\bar x,x)d\meas(x)<\infty\,\,
\text{for some, and thus all, $\bar x\in X$}\right\}.
$$
We say that a metric measure space $(X,d,\meas)$
is a $\CD(K,\infty)$ metric measure space, with $K\in\R$, if
the Relative Entropy Functional ${\rm Ent}(\mu):\Prob_2(X)\to\R\cup\{\infty\}$ given by
$$
{\rm Ent}(\mu):=
\begin{cases}
\int\rho\log\rho d\meas&\text{if $\mu=\rho\meas\ll\meas$;}
\\ 
\infty &\text{otherwise}
\end{cases}
$$
is $K$-convex along Wasserstein geodesics in $\Prob_2(X)$. This means that for all
$\mu,\,\nu\in\Prob_2(X)$ there exists a constant speed geodesic $\mu_t:[0,1]\to\Prob_2(X)$
relative to $W_2$ with $\mu_0=\mu$, $\mu_1=\nu$ and
$$
{\rm Ent}(\mu_t)\leq (1-t){\rm Ent}(\mu_0)+t{\rm Ent}(\mu_1)-\frac{K}{2}t(1-t)W_2^2(\mu_0,\mu_1)
\qquad\forall t\in [0,1].
$$
Also, in $\CD(K,\infty)$ spaces we shall use the implication
\begin{equation}\label{eq:rajala}
\Ch(u)=0\quad\Longrightarrow\quad \text{$u=c$ $\meas$-a.e. in $X$, for some $c\in\R$.}
\end{equation}
Let us provide a justification of \eqref{eq:rajala} when $K\geq 0$.
By the chain rule, it suffices to show this implication when $u\geq 0$ and $\int u^2 d\meas=1$. Then, the
identification
$$
|\partial{\rm Ent\,}|(u^2\meas)=2\sqrt{\Ch(u)}
$$
provided in \cite[Thm.~9.3(i)]{Ambrosio-Gigli-Savare11} gives that $|\partial{\rm Ent\,}|(u^2\meas)=0$. By convexity, 
this yields that $u^2\meas$ is a minimizer of ${\rm Ent}$, whence $u=1$ $\meas$-a.e. in $X$. In the general case
we need to invoke the local Poincar\'e inequality of \cite{Rajala}.

\smallskip
{\bf $\CD^*(K,N)$ spaces.} For $N\geq 1$ and $K\in\R$, we denote by $\CD^*(K,N)$
the class of  metric measure spaces satisfying the reduced curvature-dimension condition and 
introduced in \cite{Bacher-Sturm}. This class includes the $\CD(K,N)$ class considered in 
\cite{Lott-Ricci-2009,Sturm-Geometry-I-2006,Sturm-Geometry-II-2006} (see \cite[Prop.~2.5]{Bacher-Sturm}) 
and it is contained in $\CD(K,\infty)$ (see \cite[Lem~9.13]{Ambrosio-Mondino-Savare15}).

These spaces satisfy the Bishop-Gromov comparison inequality, therefore are doubling as metric measure spaces.
In particular, since in our setting $\meas$ is finite, these spaces are compact whenever their diameter is finite.

\section{The Krasnoselskii spectrum}

For a Banach space $W$, denote 
\[
\mathcal{V}(W) = \left\{ V \subset W \middle|\ V \text{ closed and symmetric } \right\}.
\]
The Krasnoselskii genus $\gamma_{W}:\mathcal{V}(W) \to \mathbb{N}\cup \{ \infty \}$ is defined as follows. 
Let $V \in \mathcal{V}(W)$ be non-empty. 
If there exist $m \in \mathbb{N}$ and an odd function $h \in C^0(V;\mathbb{R}^m\setminus\{0\})$, we set
\[
\gamma_{W}(V) =
\inf\left\{ m \in \mathbb{N} \, \middle| \ \exists h \in C^0(V ; \R^m \backslash \{0\}), h \text{ odd } \right\}.
\]
Otherwise, we set $\gamma_{W}(V) = \infty$. Further, we define $\gamma_W(\emptyset) = 0$.
It is not difficult to check
\cite[Prop.~5.2]{Struwe-Variational-2008} that
\begin{equation}\label{eq:compagenus}
\gamma_W(S(L))={\rm dim\,}L
\end{equation}
for a finite-dimensional subspace $L$ of $W$, where here and in the sequel 
$$
S(L):=\left\{v\in L:\ \|v\|=1\right\}
$$
is the unit sphere of $L$.

For $k\geq 1$ we define also
\[
\mathcal{F}_k(W) = \left\{ V \in \mathcal{V}(W) \middle|\ \gamma_{W}(V) \geq k, \,\,V \subset S(W) \,\,
\hbox{\rm compact}\right\}.
\]
We will adopt the following ``nonlinear'' definition of spectrum
\[
\lambda_k(\Ch) = 
\inf_{V \in \mathcal{F}_k(L^2(\meas)) } \sup_{u \in V}\Ch (u). 
\]
Notice that still $\lambda_1=0$, and that
there may be critical values of the energy $\Ch$ that do not correspond to a value of $\lambda_k$ for any $k \in \mathbb{N}$.

In the degenerate case when $X$ consists of a single point, $\meas$ is a Dirac mass, $\Ch$ is identically null and
$L^2(\meas)$ is $1$-dimensional, so the above definitions give $\lambda_1(\Ch)=0$, 
$\lambda_k(\Ch)=
+\infty$ for all $k>1$. 
Recall also that, for $\CD(K,\infty)$ metric measure
spaces $(X,d,\meas)$, either $X$ consists of a single point, or
$\meas$ has no atom.

For the convenience of the reader, we include Proposition 5.4 of \cite{Struwe-Variational-2008} on properties of the Krasnoselskii genus.

\begin{proposition}[{cf. \cite{Struwe-Variational-2008}}]
\label{pr:PropertiesGenus}
Let $V,\, V_1,\, V_2 \in \mathcal{V}(W)$ and let $h: W \to W$ be continuous and odd. Then the following properties hold:
\begin{enumerate}
	\item $\gamma(V) \geq 0$; $\gamma(V) = 0$ if and only if $V = \emptyset$.
	\item $V_1 \subset V_2 $ implies $ \gamma(V_1) \leq \gamma(V_2)$.
	\item $\gamma(V_1 \cup V_2) \leq \gamma(V_1) + \gamma(V_2)$.
	\item $\gamma(V) \leq \gamma\left(\overline{h(V)}\right)$. 
	\item If $V$ is compact and $0 \notin V$, then $\gamma(V) < \infty$ and there is a neighborhood $N$ of $V$ in $W$ such that $\overline{N} \in \mathcal{V}(W)$ and $\gamma(V) = \gamma(\overline{N})$.
\end{enumerate}
\end{proposition}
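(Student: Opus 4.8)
The plan is to dispatch items (1)--(4) first, as essentially formal consequences of the definition, and then to treat (5), which requires a genuine construction. Throughout I will use that $W$, being a normed space, is a metric space, so that Tietze's extension theorem is available, and that any odd map sends $0$ to $0$, so that $0\in V$ already forces $\gamma(V)=\infty$. Item (1) is immediate: $\gamma(V)\ge0$ and $\gamma(\emptyset)=0$ hold by definition, while for $V\ne\emptyset$ there is no odd continuous map into $\R^0\setminus\{0\}=\emptyset$, so $\gamma(V)\ge1$. For (2), if $\gamma(V_2)=m<\infty$ is realized by an odd $h\in C^0(V_2;\R^m\setminus\{0\})$, then its restriction to $V_1$ shows $\gamma(V_1)\le m$, the case $\gamma(V_2)=\infty$ being trivial.

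For (3) and (4) the one new ingredient is a symmetrization trick. For (3), assuming both genera finite and realized by odd maps $h_j\in C^0(V_j;\R^{m_j}\setminus\{0\})$, I extend each $h_j$ componentwise to a continuous $\widetilde h_j\colon W\to\R^{m_j}$ by Tietze and replace it by $y\mapsto\tfrac12\bigl(\widetilde h_j(y)-\widetilde h_j(-y)\bigr)$, which is odd, continuous, and still agrees with $h_j$ on $V_j$; the concatenation $y\mapsto(\widetilde h_1(y),\widetilde h_2(y))$ is then odd, continuous on the closed symmetric set $V_1\cup V_2$, and nonzero there because its first block is nonzero on $V_1$ and its second on $V_2$, whence $\gamma(V_1\cup V_2)\le m_1+m_2$. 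For (4), $\overline{h(V)}$ lies in $\mathcal V(W)$ because $h$ is odd and $V$ symmetric; if $g\in C^0(\overline{h(V)};\R^m\setminus\{0\})$ is an odd map realizing $\gamma(\overline{h(V)})=m$, then $g\circ h|_V$ is odd, continuous and nowhere zero on $V$, so $\gamma(V)\le m$.

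For (5), to show $\gamma(V)<\infty$ I exploit compactness together with $0\notin V$: for each $x\in V$ one has $x\ne0$, so the ball $B(x,\tfrac12\|x\|)$ is disjoint from its mirror image $B(-x,\tfrac12\|x\|)$; I extract a finite subcover $B(x_1,r_1),\dots,B(x_n,r_n)$ of $V$, set $\varphi_i(y):=(r_i-\|y-x_i\|)^+$, and define $h(y):=\bigl(\varphi_1(y)-\varphi_1(-y),\dots,\varphi_n(y)-\varphi_n(-y)\bigr)$, which is odd and continuous; for $y\in V$ there is an index $i$ with $y\in B(x_i,r_i)$, and then $\varphi_i(y)>0$ while $\varphi_i(-y)=0$ by disjointness, so $h(y)\ne0$ and $\gamma(V)\le n$. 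For the neighborhood, let $m=\gamma(V)$ be realized by an odd $h\in C^0(V;\R^m\setminus\{0\})$; extending and symmetrizing as above I obtain an odd continuous $\bar h\colon W\to\R^m$ with $\bar h|_V=h$, so $|\bar h|\ge c>0$ on the compact set $V$, and hence $N:=\{y\in W:|\bar h(y)|>c/2\}$ is an open symmetric neighborhood of $V$ whose closure satisfies $\overline N\subset\{|\bar h|\ge c/2\}$. Then $\bar h|_{\overline N}$ witnesses $\gamma(\overline N)\le m$, item (2) gives $\gamma(V)\le\gamma(\overline N)$ since $V\subset\overline N$, and $\overline N\in\mathcal V(W)$; therefore $\gamma(\overline N)=\gamma(V)$.

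I expect no serious obstacle here; the two points that deserve care are the symmetrization of the Tietze extension --- used in both (3) and (5), and essential since the raw extension need not be odd --- and, in (5), taking $N$ to be a strict sublevel set $\{|\bar h|>c/2\}$ rather than the full non-vanishing set $\bar h^{-1}(\R^m\setminus\{0\})$, which is what guarantees that the closure $\overline N$ still avoids the zero set of $\bar h$.
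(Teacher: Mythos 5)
Your proof is correct; the paper itself does not prove this proposition but simply quotes it from Struwe (Prop.\ 5.4), and your argument is exactly the standard one given there: restriction and composition for (2) and (4), Tietze extension plus the symmetrization $\widetilde h(y)\mapsto\tfrac12(\widetilde h(y)-\widetilde h(-y))$ for (3) and for the neighborhood in (5), and the finite cover of $V$ by balls $B(x,\|x\|/2)$ disjoint from their antipodal images for finiteness of the genus. The two delicate points you flag (oddness of the extension, and taking $N$ as a strict sublevel set of $|\bar h|$ so that $\overline N$ avoids the zero set) are handled correctly, so nothing needs to be changed.
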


The last item implies the upper semicontinuity of the genus w.r.t. Hausdorff convergence.

\begin{proposition}
\label{pr:SemicontinuityGenus}
Let $W$ be a Banach space and suppose compact sets $F_i \in \mathcal{V}(W)$ converge to a compact set $F \in \mathcal{V}(W)$ in the Hausdorff distance, such that $0 \notin F$. Then
\[
\gamma(F) \geq \limsup_{i \to \infty} \gamma(F_i).
\]
\end{proposition}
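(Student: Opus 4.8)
The plan is to deduce this directly from item (5) of Proposition~\ref{pr:PropertiesGenus} together with the monotonicity property in item (2). The point of item (5) is precisely that, for a compact set $F$ with $0\notin F$, the genus is ``stable under small enlargements'': there is a neighborhood that has the same genus as $F$ itself. Hausdorff convergence then forces $F_i$ to lie inside that neighborhood for $i$ large, and monotonicity closes the argument.

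Concretely, first I would dispose of the trivial case: if $\limsup_i\gamma(F_i)=0$ there is nothing to prove, and if $\gamma(F)=+\infty$ the inequality holds vacuously. Otherwise, apply Proposition~\ref{pr:PropertiesGenus}(5) to $F$ (which is legitimate since $F$ is compact and $0\notin F$): this yields a neighborhood $N$ of $F$ in $W$ with $\overline N\in\mathcal V(W)$ and $\gamma(\overline N)=\gamma(F)<\infty$. Since $F$ is compact and $N\supset F$ is a neighborhood, $N$ contains a uniform neighborhood $\{x\in W:\ \dist(x,F)<\delta\}$ for some $\delta>0$; indeed $x\mapsto\dist(x,W\setminus N)$ is continuous and strictly positive on the compact set $F$, hence bounded below by a positive constant. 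Next, by Hausdorff convergence $F_i\to F$ there is $i_0$ such that $F_i\subset\{x:\ \dist(x,F)<\delta\}\subset N\subset\overline N$ for all $i\geq i_0$. Then Proposition~\ref{pr:PropertiesGenus}(2) gives $\gamma(F_i)\leq\gamma(\overline N)=\gamma(F)$ for all $i\geq i_0$, and taking $\limsup_{i\to\infty}$ yields $\limsup_i\gamma(F_i)\leq\gamma(F)$, as claimed.

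There is essentially no serious obstacle here; the only mild point requiring care is that the ``neighborhood'' $N$ produced by item (5) may a priori be an arbitrary neighborhood rather than a metric $\delta$-ball around $F$, so one has to invoke compactness of $F$ to extract a uniform enlargement that the Hausdorff convergence can see. Everything else is bookkeeping with the two listed properties of the genus, and I would keep the write-up to a few lines.
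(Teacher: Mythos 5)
Your argument is correct and is essentially the paper's own proof: both apply Proposition~\ref{pr:PropertiesGenus}(5) to $F$ to get a neighborhood $N$ with $\gamma(\overline N)=\gamma(F)$, note that Hausdorff convergence puts $F_i$ inside $N$ for large $i$, and conclude by monotonicity. Your extra remark that compactness of $F$ upgrades $N$ to a uniform $\delta$-enlargement is a fine (and slightly more careful) way to justify the inclusion step that the paper states without comment.
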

\begin{proof}
According to the previous proposition, given a compact set $F \in \mathcal{V}(W)$ with $0\notin F$, then $\gamma(F)<\infty$ and there exists a neighborhood $N$ of $F$ such that $\overline{N} \in \mathcal{V}(W)$ and $\gamma(\overline{N}) = \gamma(F)$. 
For $i$ large enough, $F_i \subset N$, and therefore for $i$ large enough, $\gamma(F_i) \leq \gamma(\overline{N}) = \gamma(F)$.
\end{proof}

\section{Setup and main result}

We consider $\CD(K,\infty)$ metric measure spaces $(X_i,d_i,\meas_i)$ and $(X,d,\meas)$, that are isometrically embedded in a common metric space $(Z,\rho)$, and such that $\meas_i \to \meas$ weakly in duality with $\Cb(Z)$. We denote by $\Ch^i,\,\Ch$ the corresponding
Cheeger energies, by $\Delta_i,\,\Delta$ their laplacians, by $h^i_t$, $h_t$ the heat flows. 
As illustrated in \cite{Gigli-Convergence-2015}, this ``extrinsic'' notion of convergence is equivalent to many others,
and it reduces to measured Gromov-Hausdorff convergence in the class of uniformly doubling metric measure spaces.
In addition, the main result of \cite{Gigli-Convergence-2015} is the Mosco convergence of $\Ch^i$ to $\Ch$, namely:
\begin{itemize}
\item[(a)] for all $f\in L^2(\meas)$ there exist $f_i$ $L^2$-strongly convergent to $f$ with
$\limsup_i\Ch^i(f_i)\leq\Ch(f)$;
\item[(b)] $\liminf_i\Ch^i(f_i)\geq\Ch(f)$ whenever $f_i$ $L^2$-weakly converge to $f$.
\end{itemize}
Notice that the Mosco convergence differs from $\Gamma$-convergence because different notions of convergence
are considered in (a) and (b).

In the proof of the lower semicontinuity theorem we shall also need the following compactness
result w.r.t. $L^2$-strong convergence from
\cite[Thm.~6.3]{Gigli-Convergence-2015}:

\begin{theorem}[Compactness]\label{tcompa}
If $f_i\in D(\Ch^i)$ satisfy 
\begin{equation}\label{eq:april30}
\sup_i\|f_i\|^2_{L^2(\meas_i)}+\Ch^i(f_i)<\infty,\qquad
\lim_{R\to\infty}\sup_i\int_{Z\setminus B_R(\bar x)}f_i^2d\meas_i=0
\end{equation}
for some $\bar x\in X$, then $f_i$ admit a $L^2$-strongly convergent subsequence. The second condition in \eqref{eq:april30} is
implied by the first one  if the estimate
\begin{equation}\label{eq:wlsi}
\int \rho^2(x,\bar x) f^2(x)d\meas_i(x)\leq A\int f^2d\meas_i+B\Ch^i(f)\qquad\forall f\in L^2(\meas_i)
\end{equation}
holds with $A,\,B\geq 0$, $\bar x\in Z$ independent of $i$. Finally, \eqref{eq:wlsi} holds if
either $K>0$, or $(Z,\rho)$ has bounded support.
\end{theorem}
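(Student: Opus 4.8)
The plan is to handle the three assertions separately, the substance being in the strong $L^2$-precompactness. For that, the uniform bound $C:=\sup_i\bigl(\|f_i\|_{L^2(\meas_i)}^2+\Ch^i(f_i)\bigr)<\infty$ already yields, along a subsequence not relabelled, an $L^2$-weak limit $f$, and the Mosco $\liminf$-inequality (b) gives $\Ch(f)\le\liminf_i\Ch^i(f_i)\le C$, so $f\in D(\Ch)$. By the definition of $L^2$-strong convergence it then suffices to prove $\limsup_i\|f_i\|_{L^2(\meas_i)}\le\|f\|_{L^2(\meas)}$, and I would do so by exploiting the regularizing effect of the heat flow. From the energy identity (property (5) of the gradient-flow theorem applied to $\Phi=\tfrac12\Ch^i$, together with $\tfrac{d}{dt}h^i_t f_i=\Delta_i h^i_t f_i$) one gets $\int_0^t\|\Delta_i h^i_s f_i\|_{L^2(\meas_i)}^2\,ds=\tfrac12\Ch^i(f_i)-\tfrac12\Ch^i(h^i_t f_i)\le\tfrac12 C$, whence by Cauchy--Schwarz
\[
\|f_i-h^i_t f_i\|_{L^2(\meas_i)}\le\int_0^t\|\Delta_i h^i_s f_i\|_{L^2(\meas_i)}\,ds\le\sqrt{tC/2}
\]
uniformly in $i$; thus $h^i_t f_i$ approximates $f_i$ in $L^2(\meas_i)$, uniformly in $i$, as $t\downarrow 0$.

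The core of the matter is then the strong $L^2$-precompactness of the regularized family: for each fixed $t>0$, every subsequence of $\{h^i_t f_i\}_i$ admits a further $L^2$-strongly convergent subsequence. Granting this, fix $t>0$ and pass to a subsequence with $h^i_t f_i\to\zeta_t$ strongly; since $f_i\rightharpoonup f$ weakly, $f_i-h^i_t f_i\rightharpoonup f-\zeta_t$, so by weak lower semicontinuity of the norm $\|f-\zeta_t\|_{L^2(\meas)}\le\liminf_i\|f_i-h^i_t f_i\|_{L^2(\meas_i)}\le\sqrt{tC/2}$, and therefore
\[
\limsup_i\|f_i\|_{L^2(\meas_i)}\le\sqrt{tC/2}+\limsup_i\|h^i_t f_i\|_{L^2(\meas_i)}=\sqrt{tC/2}+\|\zeta_t\|_{L^2(\meas)}\le 2\sqrt{tC/2}+\|f\|_{L^2(\meas)}.
\]
Diagonalizing along $t=1/n$ and letting $n\to\infty$ yields $\limsup_i\|f_i\|_{L^2(\meas_i)}\le\|f\|_{L^2(\meas)}$, which together with $f_i\rightharpoonup f$ is the strong convergence. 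The precompactness of $\{h^i_t f_i\}_i$ is where the structure of $\CD(K,\infty)$ spaces enters: these functions satisfy not only $\|h^i_t f_i\|_{L^2(\meas_i)}\le\|f_i\|_{L^2(\meas_i)}$ and $\Ch^i(h^i_t f_i)\le\Ch^i(f_i)\le C$ but also $\|\Delta_i h^i_t f_i\|_{L^2(\meas_i)}\le\|f_i\|_{L^2(\meas_i)}/t$ by \eqref{eq:Brezis2}, and one expects them to remain tight with controlled constants. Producing a strongly $L^2$-convergent subsequence from such data — a Rellich-type statement not available from a mere doubling/Poincar\'e structure, none being assumed here — is, to my mind, the main obstacle; it is the step that forces one to use the Mosco convergence of the energies together with the concentration and Bakry--\'Emery-type features intrinsic to $\CD(K,\infty)$ spaces (and where a propagation-of-tightness estimate along the heat flow is also needed).

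There remain the two auxiliary claims, which are easy. That \eqref{eq:wlsi} implies the second condition in \eqref{eq:april30} is Chebyshev's inequality: for $\bar x$ as in \eqref{eq:wlsi},
\[
\int_{Z\setminus B_R(\bar x)}f_i^2\,d\meas_i\le\frac{1}{R^2}\int\rho^2(\cdot,\bar x)f_i^2\,d\meas_i\le\frac{1}{R^2}\bigl(A\sup_i\|f_i\|_{L^2(\meas_i)}^2+B\sup_i\Ch^i(f_i)\bigr),
\]
which tends to $0$ as $R\to\infty$, uniformly in $i$ (a shift of base point reconciles $\bar x\in Z$ with the $\bar x\in X$ of the statement). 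If $(Z,\rho)$ has bounded support then $\rho(\cdot,\bar x)$ is essentially bounded, uniformly in $i$, by some $D$ on the supports of the $\meas_i$, and \eqref{eq:wlsi} holds trivially with $B=0$, $A=D^2$.

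Finally, for $K>0$ I would use the logarithmic Sobolev inequality valid in every $\CD(K,\infty)$ space, ${\rm Ent}_{\meas_i}(g^2)\le\tfrac{2}{K}\Ch^i(g)$ whenever $\|g\|_{L^2(\meas_i)}=1$; this is a consequence of the $K$-convexity of the entropy via the HWI inequality, together with the identification of the Fisher information with $4\Ch^i$ encoded in the identity $|\partial{\rm Ent}|(g^2\meas)=2\sqrt{\Ch(g)}$ recalled above. Herbst's argument applied to the $1$-Lipschitz function $\rho(\cdot,\bar x)$ then gives a sub-Gaussian bound $\int e^{\alpha\rho^2(\cdot,\bar x)}\,d\meas_i\le C_0$ with $\alpha,C_0$ depending only on $K$ and on $\delta:=\meas(B_1(\bar x))>0$; uniformity over $i$ follows because $\meas_i(B_1(\bar x))\ge\delta/2$ for all large $i$ by the Portmanteau theorem, the finitely many remaining indices being absorbed into $C_0$. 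Combining the Gibbs variational inequality $\int V g^2\,d\meas_i\le\tfrac1\alpha{\rm Ent}_{\meas_i}(g^2)+\tfrac1\alpha\log\int e^{\alpha V}\,d\meas_i$ with $V=\rho^2(\cdot,\bar x)$, the logarithmic Sobolev inequality, and the $2$-homogeneity of $\Ch^i$ (which removes the normalization $\|g\|_{L^2(\meas_i)}=1$), one obtains \eqref{eq:wlsi} with $B=\tfrac{2}{\alpha K}$ and $A=\tfrac1\alpha\log C_0$, independent of $i$.
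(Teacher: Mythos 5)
There is a genuine gap, and it sits exactly where the theorem's content lies. Your reduction is clean: the estimate $\|f_i-h^i_tf_i\|_{L^2(\meas_i)}\le\sqrt{tC/2}$ from the energy identity, and the limiting argument giving $\limsup_i\|f_i\|_{L^2(\meas_i)}\le\|f\|_{L^2(\meas)}$ once one knows that, for each fixed $t>0$, the regularized family $\{h^i_tf_i\}_i$ is strongly $L^2$-precompact, are both correct. But you then declare that precompactness ``the main obstacle'' and do not prove it; the bounds $\|h^i_tf_i\|_{L^2(\meas_i)}\le C$, $\Ch^i(h^i_tf_i)\le C$ and $\|\Delta_i h^i_tf_i\|_{L^2(\meas_i)}\le C/t$ do not by themselves yield any Rellich-type statement across varying metric measure spaces, and the ``Bakry--\'Emery-type features'' you invoke are not available in $\CD(K,\infty)$ spaces (gradient estimates of that kind require the $\RCD$ condition). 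Note also that the tightness hypothesis, i.e.\ the second condition in \eqref{eq:april30}, is never actually used in your argument except as an aside inside the unproven claim, whereas without it the conclusion is false (mass can escape to infinity and the weak limit loses norm); any completion must use it explicitly. For the record, the paper does not prove this statement either: it is quoted verbatim from \cite[Thm.~6.3]{Gigli-Convergence-2015}, so a self-contained proof would have to reproduce that argument, which is precisely the step you leave open.

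The peripheral assertions are handled correctly. Chebyshev's inequality does give that \eqref{eq:wlsi} implies the second condition in \eqref{eq:april30}; the bounded-support case is trivial with $B=0$; and for $K>0$ the route through the logarithmic Sobolev inequality, Herbst's argument applied to the $1$-Lipschitz function $\rho(\cdot,\bar x)$, the uniform lower bound on $\meas_i(B_1(\bar x))$ coming from weak convergence, and the entropy (Gibbs) variational inequality combined with the $2$-homogeneity of $\Ch^i$ is a legitimate way to obtain \eqref{eq:wlsi} with constants independent of $i$, and is essentially the standard argument. These, however, are the easy parts of the statement; the principal compactness claim remains unproved in your proposal.
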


By applying the previous theorem to a constant sequence of spaces, the compactness of the sublevels 
is true whenever \eqref{eq:april30} holds; the latter
is true if either $K>0$, or $(X,d)$ has bounded support, or more generally an inequality of the
form \eqref{eq:wlsi} holds.
Thanks to the compactness of the sublevels of $\|\cdot\|_{L^2(\meas)}+\Ch$,
and using the finiteness of the genus of compact sets,
one can prove that the spectrum provided by the Krasnoselskii eigenvalues is discrete.

\begin{corollary}\label{cor:div}
Assume that for all $s,\,t\geq 0$ the sets
\begin{equation}\label{eq:mozzano6}
E^{s,t} := \left\{u \in L^2(\meas) :\ \|u\|_{L^2(\meas)} \le s,\,\,\Ch(u) \le t \right\} 
\end{equation}
are compact in $L^2(\meas)$. Then
$$
\lim_{k \to \infty}\lambda_k(\Ch)=+\infty.
$$
\end{corollary}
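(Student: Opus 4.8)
The plan is to argue by contradiction, exploiting the two ingredients that make the statement work: the compactness hypothesis on the sublevel sets $E^{s,t}$, and the fact (Proposition~\ref{pr:PropertiesGenus}(5)) that a compact symmetric set not containing $0$ has finite Krasnoselskii genus.

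First I would record that $k\mapsto\lambda_k(\Ch)$ is nondecreasing, since $\mathcal{F}_{k+1}(L^2(\meas))\subseteq\mathcal{F}_k(L^2(\meas))$ and the $\lambda_k$ are infima over these families; hence the limit $\Lambda:=\lim_k\lambda_k(\Ch)$ exists in $[0,+\infty]$. If $\lambda_{k_0}=+\infty$ for some $k_0$ (in particular if $\mathcal{F}_{k_0}(L^2(\meas))=\emptyset$, e.g.\ when $L^2(\meas)$ is finite dimensional), then monotonicity already gives the claim, so I may assume $\lambda_k<\infty$ for all $k$ and suppose, for contradiction, that $\Lambda<\infty$.

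Next, for each $k$ I would use the definition of $\lambda_k$ as an infimum to select a competitor $V_k\in\mathcal{F}_k(L^2(\meas))$ with $\sup_{u\in V_k}\Ch(u)<\lambda_k+1\leq\Lambda+1$. Thus every $V_k$ is compact, symmetric, contained in $S(L^2(\meas))$, has $\gamma_{L^2(\meas)}(V_k)\geq k$, and — since $\|u\|_{L^2(\meas)}=1$ and $\Ch(u)\le\Lambda+1$ for $u\in V_k$ — is contained in the single fixed set $E^{1,\Lambda+1}$ of \eqref{eq:mozzano6}. I would then set
\[
K:=E^{1,\Lambda+1}\cap S(L^2(\meas)),
\]
and check that $K$ is compact (a closed subset of the compact set $E^{1,\Lambda+1}$), symmetric (because $\|\cdot\|_{L^2(\meas)}$ and $\Ch$ are even), and satisfies $0\notin K$. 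Since $V_k\subseteq K$ for every $k$, monotonicity of the genus (Proposition~\ref{pr:PropertiesGenus}(2)) yields $\gamma_{L^2(\meas)}(K)\geq\gamma_{L^2(\meas)}(V_k)\geq k$ for all $k$, so $\gamma_{L^2(\meas)}(K)=\infty$. This contradicts Proposition~\ref{pr:PropertiesGenus}(5), according to which a compact set not containing the origin has finite genus, and the proof is complete.

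The argument is short and there is no serious obstacle: the only thing one must get right is to package all the near-optimal competitors $V_k$ into one fixed compact symmetric set avoiding $0$ — which is precisely where the compactness assumption on $E^{s,t}$ enters — and then invoke finiteness of the genus of compact sets. One should also not overlook the trivial case in which some $\mathcal{F}_k(L^2(\meas))$ is empty (or $\lambda_k=+\infty$ for some $k$), which is handled immediately by monotonicity.
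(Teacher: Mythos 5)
Your proof is correct and follows essentially the same route as the paper: argue by contradiction, pick near-optimal competitors $V_k$, observe they all lie in a single compact symmetric subset of the sphere avoiding the origin (you use $E^{1,\Lambda+1}\cap S(L^2(\meas))$ directly, the paper takes the closure of $\bigcup_k V_k$), and contradict the finiteness of the genus of compact sets via Proposition~\ref{pr:PropertiesGenus}. The extra remarks on monotonicity of $k\mapsto\lambda_k$ and the trivial case $\lambda_k=+\infty$ are fine and only make explicit what the paper leaves implicit.
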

\begin{proof} By contradiction, assume that $\lambda_k(\Ch)< M<\infty$ for all $k\geq 1$. 
Let $V_k\in\mathcal{F}_k(L^2(\meas))$
be satisfying
$$
\sup_{u\in V_k}\Ch(u)\leq M.
$$ 
By Theorem~\ref{tcompa}, the closure $A$ of the union $\cup_k V_k$ is compact. From 
\cite[Prop.~5.4]{Struwe-Variational-2008} then we obtain that
$\gamma(A)<\infty$, and this contradicts the fact that $\gamma(A)\geq\gamma(V_k)\geq k$.
\end{proof}

Our main result is the following.

\begin{theorem}[Convergence of eigenvalues]\label{tmain}
Under the above assumptions on the $\CD(K,\infty)$ spaces, one has
\begin{equation}\label{eq:mozzano3}
\limsup_{i \to \infty} \lambda_k(\Ch^i ) \leq \lambda_k(\Ch)
\qquad\forall k\geq 1.
\end{equation}
In addition, if \eqref{eq:wlsi} holds with constants $A,\,B\geq 0$ and $\bar x\in Z$
independent of $i$, one has
\begin{equation}\label{eq:mozzano4}
\liminf_{i \to \infty} \lambda_k(\Ch^i ) \geq \lambda_k(\Ch)
\qquad\forall k\geq 1.
\end{equation}
\end{theorem}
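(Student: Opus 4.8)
The plan is to prove the two inequalities \eqref{eq:mozzano3} and \eqref{eq:mozzano4} separately, using the Mosco convergence (a)--(b) together with the compactness result of \thmref{tcompa}. The upper bound \eqref{eq:mozzano3} is the easier half: fix $k\geq 1$ and $V\in\mathcal{F}_k(L^2(\meas))$ that nearly achieves $\lambda_k(\Ch)$, say $\sup_{u\in V}\Ch(u)\leq\lambda_k(\Ch)+\varepsilon$. Since $V$ is compact, we cover it by finitely many balls and use the recovery property (a) to build, for each $i$, a continuous (in fact Lipschitz, after composing with a partition of unity argument) map $T_i:V\to L^2(\meas_i)$ that is odd, whose image after normalization lies in $S(L^2(\meas_i))$, and such that $\limsup_i\sup_{u\in V}\Ch^i(T_i u)\leq\sup_{u\in V}\Ch(u)$; the key point is that (a) only gives recovery sequences for individual functions, so one must glue these together continuously and symmetrically over the compact set $V$ — this is exactly the ``continuous family of approximations'' difficulty flagged in the introduction. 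Once $V_i:=\overline{T_i(V)}$ (after normalization) is constructed, property (iv) of \propref{pr:PropertiesGenus} gives $\gamma_{L^2(\meas_i)}(V_i)\geq\gamma_{L^2(\meas)}(V)\geq k$, so $V_i\in\mathcal{F}_k(L^2(\meas_i))$ and $\lambda_k(\Ch^i)\leq\sup_{v\in V_i}\Ch^i(v)$, which passes to the limsup to give \eqref{eq:mozzano3}.

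\textbf{The lower bound.} For \eqref{eq:mozzano4} I would argue by contradiction: suppose along a subsequence $\lambda_k(\Ch^i)\leq\lambda_k(\Ch)-2\delta$ for some $\delta>0$. Choose $V_i\in\mathcal{F}_k(L^2(\meas_i))$ with $\sup_{v\in V_i}\Ch^i(v)\leq\lambda_k(\Ch)-\delta$. The plan is to pass these sets to a limit set $V_\infty\subset S(L^2(\meas))$ and show $V_\infty\in\mathcal{F}_k(L^2(\meas))$ with $\sup_{u\in V_\infty}\Ch(u)\leq\lambda_k(\Ch)-\delta$, contradicting the definition of $\lambda_k(\Ch)$. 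Here is where \eqref{eq:wlsi} and \thmref{tcompa} enter: since every $v\in\bigcup_i V_i$ satisfies $\|v\|_{L^2(\meas_i)}=1$ and $\Ch^i(v)\leq\lambda_k(\Ch)$, hypothesis \eqref{eq:wlsi} gives the uniform tightness \eqref{eq:april30}, so $\bigcup_i V_i$ is relatively $L^2$-strongly compact across the variable spaces. One then takes $V_\infty$ to be the set of $L^2$-strong limits of sequences $v_i\in V_i$; strong convergence preserves the norm constraint ($\|\cdot\|=1$), Mosco's lower bound (b) gives $\Ch(u)\leq\liminf_i\Ch^i(v_i)\leq\lambda_k(\Ch)-\delta$ for each such limit, and compactness/closedness of $V_\infty$ follows from a diagonal argument. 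Symmetry of $V_\infty$ is inherited from that of the $V_i$.

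\textbf{The genus lower bound for the limit set — the main obstacle.} The crux is showing $\gamma_{L^2(\meas)}(V_\infty)\geq k$. The $V_i$ live in different Hilbert spaces, so one cannot directly invoke \propref{pr:SemicontinuityGenus}; instead I would fix isometric-type identifications coming from the $L^2$-strong convergence framework, or more robustly argue as follows. Suppose for contradiction $\gamma_{L^2(\meas)}(V_\infty)\leq k-1$: then there is an odd continuous $h:V_\infty\to\R^{k-1}\setminus\{0\}$. Since $V_\infty$ is compact and $0\notin V_\infty$, \propref{pr:PropertiesGenus}(v) gives a symmetric neighborhood $N$ of $V_\infty$ in $L^2(\meas)$ with $\gamma(\overline N)=\gamma(V_\infty)\leq k-1$, and (by Tietze/Dugundji applied oddly) $h$ extends to an odd continuous $\tilde h:\overline N\to\R^{k-1}\setminus\{0\}$. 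Now the strong compactness of $\bigcup_i V_i$ together with the strong-convergence structure forces: for $i$ large, every $v\in V_i$ has an $L^2$-strong ``image'' $\iota_i(v)\in L^2(\meas)$ lying in $N$, with $\iota_i$ continuous and odd (this uses that the $L^2$-strong convergence of measures furnishes, for compact families, uniformly close continuous embeddings — the technical heart). Composing, $\tilde h\circ\iota_i$ is an odd continuous map $V_i\to\R^{k-1}\setminus\{0\}$, so $\gamma_{L^2(\meas_i)}(V_i)\leq k-1$, contradicting $V_i\in\mathcal{F}_k(L^2(\meas_i))$. I expect the genuinely delicate step to be the construction of these odd continuous near-isometric maps $\iota_i:V_i\to L^2(\meas)$ with image in a prescribed neighborhood of $V_\infty$; making this precise may require approximating each $v\in V_i$ by applying the heat flow $h^i_t$ for small $t$ (using \eqref{eq:Brezis1}, \eqref{eq:Brezis2} and the contractivity \eqref{eq:contractivity} to keep the map well-defined and odd while gaining enough compactness/continuity), then using (a) in a parametrized form — exactly the mirror of the gluing argument from the upper bound.
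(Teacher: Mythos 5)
Your overall architecture matches the paper's (upper bound via transferring a near-optimal set $V$ to $L^2(\meas_i)$; lower bound via passing near-optimal sets $V_i$ to a limit set in $L^2(\meas)$, using \thmref{tcompa}, Mosco's liminf inequality, and semicontinuity of the genus), but in both halves the load-bearing step is exactly the one you leave unconstructed: a continuous, odd, norm-controlled map transferring functions between the varying $L^2$ spaces. In the upper bound you assert a map $T_i:V\to L^2(\meas_i)$ obtained by ``gluing recovery sequences with a partition of unity''; you do not carry this out, and as stated it does not yet deliver what you claim (oddness requires an explicit symmetrization of the glued map, the normalization requires a uniform lower bound on $\|T_iu\|_{L^2(\meas_i)}$, and the uniform energy bound requires convexity/2-homogeneity arguments over a symmetric finite net --- none of which appear). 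In the lower bound you again defer ``the technical heart'', the odd continuous maps $\iota_i:V_i\to L^2(\meas)$ landing in a prescribed neighborhood of the limit set, to a ``parametrized form'' of Mosco recovery. The paper's key idea, which is absent from your proposal, is that no gluing is needed at all: since the measures are nonatomic one can pick $2^{-i}$-almost optimal transport maps $T_i:X_i\to X$ and $S_i:X\to X_i$ (Pratelli) and define \emph{linear isometries} $\pi_i(v)=v\circ T_i$ and $\sigma_i(w)=w\circ S_i$, which are automatically continuous and odd and compatible with $L^2$-strong convergence across the spaces (Lemmas~\ref{le:ContructionIsometry} and \ref{le:TransferL2WeakConvergence}). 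For the upper bound the energy control then comes not from recovery sequences over $V$ but from post-composing with the heat flow, $v\mapsto h^i_t\pi_i(v)$, using \eqref{eq:Brezis1}--\eqref{eq:Brezis2}, the strong $H^{1,2}$ convergence of $h^i_t\pi_i(v_i)$ (Lemma~\ref{le:StrongLiftHeatKernel}) and a compactness/contradiction argument over $V$ (Lemma~\ref{le:ControlL2}); for the lower bound the $\sigma_i$ reduce everything to Hausdorff convergence of the compact sets $\sigma_i(V_i)$ inside the fixed space $L^2(\meas)$, so that Proposition~\ref{pr:SemicontinuityGenus} applies directly, with no extension/Tietze argument needed.

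A secondary but real gap in your lower bound: for your contradiction argument you need that, for large $i$, \emph{every} $v\in V_i$ is transferred into the fixed neighborhood $N$ of the limit set, i.e.\ a Hausdorff-type convergence $\sup_{v\in V_i}\dist(\iota_i(v),V_\infty)\to 0$ along a subsequence. Defining $V_\infty$ as the set of limits of sequences $v_i\in V_i$ and invoking ``a diagonal argument'' does not give this uniformity; the paper obtains it by first proving uniform total boundedness of the sublevel sets $E^{s,t}_i$ (Lemma~\ref{le:UniformTotalBoundedness}, which is where \eqref{eq:wlsi} and \thmref{tcompa} enter) and then extracting a Hausdorff limit of $\sigma_i(V_i)$ (Lemma~\ref{le:HausdorffCompactness}). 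Without the isometries $\sigma_i$ (or some equally concrete substitute) and without this uniform total boundedness step, your genus argument for the limit set does not close, so the proposal as written has a genuine gap at its central point.
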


A simple continuity argument then gives also uniform bounds on $\lambda_k$ in compact families
of metric measure spaces and provides a uniform rate of growth of $\lambda_k$. 

\begin{corollary}\label{cmain}
Let $K\in\R$ and $N\in [1,\infty)$.
For any $k\geq 2$ there exist positive and finite constants $C_1(K,N,k)$, $C_2(K,N,k)$ such that
\begin{equation}\label{eq:unif bound}
C_1(K, N,k) \le \lambda_k(\Ch) \le C_2(K, N,k),
\end{equation}
for any $\CD^*(K,N)$-space $(X,d,\meas)$ with $\mathrm{diam}\,X=1$. In addition,
$C_1(K,N,k)\to +\infty$ as $k\to +\infty$.
\end{corollary}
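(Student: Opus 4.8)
The plan is to obtain Corollary~\ref{cmain} from Theorem~\ref{tmain} and Corollary~\ref{cor:div} by a precompactness-and-contradiction argument. Set $C_2(K,N,k)$ (resp.\ $C_1(K,N,k)$) to be the supremum (resp.\ infimum) of $\lambda_k(\Ch_Y)$ as $Y$ ranges over all $\CD^*(K,N)$ spaces with $\mathrm{diam}\,Y=1$ (if no such space exists there is nothing to prove). The key structural input is that these spaces are \emph{uniformly} doubling, by Bishop--Gromov, so by Gromov's precompactness theorem any sequence of them has a subsequence converging in the measured Gromov--Hausdorff sense; realizing this convergence extrinsically inside a common compact $(Z,\rho)$ with $\meas_i\to\meas$ weakly, and using the stability of the (reduced) curvature--dimension condition, the limit $(X,d,\meas)$ is again a $\CD^*(K,N)$, hence $\CD(K,\infty)$, space, with $\mathrm{diam}\,X=1$ since the diameter is continuous under this convergence. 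In particular $X$ is not a single point, so $\meas$ has no atom; and because every space in sight has diameter $\le\mathrm{diam}(Z)<\infty$, the weighted inequality \eqref{eq:wlsi} holds for the $\Ch^i$ and for $\Ch$ with $A=\mathrm{diam}(Z)^2$, $B=0$ and any fixed $\bar x\in Z$. Thus \emph{both} conclusions \eqref{eq:mozzano3}, \eqref{eq:mozzano4} of Theorem~\ref{tmain} are available along such a convergent subsequence, and by Theorem~\ref{tcompa} the sublevels $E^{s,t}$ (see \eqref{eq:mozzano6}) of $\Ch$ are compact, so Corollary~\ref{cor:div} applies on $X$.

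For the upper bound I would argue by contradiction: if $C_2(K,N,k)=+\infty$, pick $\CD^*(K,N)$ spaces $X_i$ of diameter $1$ with $\lambda_k(\Ch^i)\to+\infty$, pass to a convergent subsequence with limit $X$ as above, and invoke \eqref{eq:mozzano3} to get $+\infty=\limsup_i\lambda_k(\Ch^i)\le\lambda_k(\Ch)$. This contradicts the finiteness of $\lambda_k(\Ch)$, which holds on every non-atomic $\CD(K,\infty)$ space: there $L^2(\meas)$ is infinite-dimensional, and choosing a $k$-dimensional subspace $L$ spanned by bounded Lipschitz functions, $\Ch$ is finite and convex on $L$, hence continuous there, hence bounded on the compact sphere $S(L)$; since $\gamma(S(L))=\dim L=k$ by \eqref{eq:compagenus}, we conclude $\lambda_k(\Ch)\le\sup_{S(L)}\Ch<\infty$.

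For the positive lower bound (here $k\ge 2$ is essential, since $\lambda_1\equiv 0$): if $C_1(K,N,k)=0$, take $\CD^*(K,N)$ spaces $X_i$ of diameter $1$ with $\lambda_k(\Ch^i)\to 0$, pass to a limit $X$, and apply \eqref{eq:mozzano4} to obtain $0\ge\liminf_i\lambda_k(\Ch^i)\ge\lambda_k(\Ch)$. It then suffices to check $\lambda_k(\Ch)>0$; since $\lambda_k$ is nondecreasing in $k$ we may take $k=2$. If $\lambda_2(\Ch)=0$, choose $V_n\in\mathcal{F}_2(L^2(\meas))$ with $\sup_{V_n}\Ch\to 0$; since $\gamma(V_n)\ge 2$, the odd continuous map $u\mapsto\int u\,d\meas$ cannot omit $0$ on $V_n$, so there is $u_n\in V_n$ with $\|u_n\|_{L^2}=1$, $\int u_n\,d\meas=0$ and $\Ch(u_n)\to 0$; eventually $u_n\in E^{1,1}$, which is compact, so along a subsequence $u_n\to u$ strongly in $L^2$, with $\|u\|_{L^2}=1$, $\int u\,d\meas=0$, and $\Ch(u)=0$ by lower semicontinuity; then \eqref{eq:rajala} forces $u$ to be constant, hence $u=0$, a contradiction. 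Finally, for $C_1(K,N,k)\to+\infty$ as $k\to\infty$: if not, there are $M<\infty$, integers $k_j\uparrow\infty$ and $\CD^*(K,N)$ spaces $X_j$ of diameter $1$ with $\lambda_{k_j}(\Ch^j)\le M$; passing to a limit $X$, for each fixed $k$ and all large $j$ we have $\lambda_k(\Ch^j)\le\lambda_{k_j}(\Ch^j)\le M$ by monotonicity in $k$, so \eqref{eq:mozzano4} gives $\lambda_k(\Ch)\le M$ for every $k$, contradicting $\lim_k\lambda_k(\Ch)=+\infty$ from Corollary~\ref{cor:div}.

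The only genuinely non-elementary step is the precompactness/stability input used in the first paragraph: one must know that uniformly doubling $\CD^*(K,N)$ spaces of bounded diameter form a precompact family for measured Gromov--Hausdorff convergence, that this convergence can be realized in the ``extrinsic'' form required by Theorem~\ref{tmain} (common ambient $(Z,\rho)$, $\meas_i\to\meas$ weakly), and that the curvature--dimension condition passes to the limit. All of this is standard --- Gromov's precompactness theorem together with the stability of $\CD^*(K,N)$ (and of $\CD(K,\infty)$) under mGH convergence --- but it is the external ingredient that makes the reduction work; everything else is the bookkeeping above, and the uniformity of the constants in \eqref{eq:wlsi} comes for free from the normalization $\mathrm{diam}\,Y=1$.
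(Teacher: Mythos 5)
Your argument is correct and follows essentially the same route as the paper: compactness of the class of normalized $\CD^*(K,N)$ spaces under measured Gromov--Hausdorff convergence combined with the two semicontinuity statements of Theorem~\ref{tmain}, positivity of $\lambda_2$ via \eqref{eq:rajala}, and divergence of the constants via Corollary~\ref{cor:div}. The only differences are cosmetic: you run separate contradiction arguments, explicitly verify finiteness of $\lambda_k(\Ch)$ on a fixed space, and obtain the zero-mean minimizing sequence through the odd map $u\mapsto\int u\,d\meas$, where the paper instead invokes continuity of $\lambda_k$ on the compact family and upper semicontinuity of the genus.
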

\begin{proof}
Let us denote by $\mathcal{M}(K, N)$ the set of all isometry classes of $CD^*(K, N)$-spaces $(X, d,\meas)$ with $\mathrm{diam}\,X=1$ and let us consider a function $\lambda_k: \mathcal{M}(K,N) \to [0,\infty)$.
The compactness of $\mathcal{M}(K, N)$ with respect to the mGH-convergence and the continuity
of $(X,d,\meas)\mapsto\lambda_k(\Ch)$ provided by 
Theorem~\ref{tmain} yield (\ref{eq:unif bound}), with $C_i$ equal to the minimal and maximal values.
The same compactness argument gives also the positivity of $C_1(K,N,k)$, namely the
positivity of $\lambda_2(\Ch)$ for any fixed $\CD^*(K,N)$ metric measure space. Indeed, if $\lambda_2(\Ch)=0$
by compactness and upper semicontinuity of the genus we can find a set $V$ with genus at least 2 such that 
$\Ch\equiv 0$ on $V$; on the other hand,
\eqref{eq:rajala} forces $V$ to consist of the functions $\{\pm 1\}$, and this set has genus equal to 1.

Finally, if $C_1(K, N, k)$ are bounded (with respect to $k$), then we can find a  subsequence $k(p)$ and
minimizers $(X_p,d_p,\meas_p) \in \mathcal{M}(K, N)$ of $\lambda_{k(p)}$ convergent to 
$(X,d,\meas) \in \mathcal{M}(K, N)$. Then, lower semicontinuity gives
that the spectrum of $(X,d,\meas)$ is bounded, contradicting Corollary~\ref{cor:div}.
\end{proof}

\section{Upper semicontinuity of the spectrum}

We shall use the following lemma, see \cite[Thm.~6.11]{Gigli-Convergence-2015} for $L^2$
convergence, see also \cite[Lem.~5.4 and Cor.~5.5]{Ambrosio-Honda16} for the case of $\RCD(K,\infty)$ spaces;
we show how the argument extends to $\CD(K,\infty)$ spaces.

\begin{lemma}\label{le:StrongLiftHeatKernel}
Let $v_i \to v$ strongly in $L^2$. Then, for every $t> 0$, $h_t^i v_i \to h_t	v$ strongly in $H^{1,2}$, i.e.
$h_t^iv_i$ $L^2$-strongly converge to $h_t v$ and $\limsup_i\Ch^i(h_t^i v_i)\leq\Ch(h_t v)$.
\end{lemma}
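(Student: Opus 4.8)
The plan is to prove convergence in $H^{1,2}$ of $h_t^iv_i$ to $h_tv$ by combining the uniform a priori estimates for the heat flow with the Mosco convergence of the Cheeger energies. First I would record the bounds that come for free from the Brezis-Komura regularization estimates \eqref{eq:Brezis1} and \eqref{eq:Brezis2}: since $v_i\to v$ strongly in $L^2$ we have $\sup_i\|v_i\|_{L^2(\meas_i)}<\infty$, hence $\sup_i\|h_t^iv_i\|_{L^2(\meas_i)}\le\sup_i\|v_i\|_{L^2(\meas_i)}<\infty$, $\sup_i\Ch^i(h_t^iv_i)\le\sup_i\|v_i\|^2_{L^2(\meas_i)}/t<\infty$, and $\sup_i\|\Delta_i h_t^iv_i\|^2_{L^2(\meas_i)}\le\sup_i\|v_i\|^2_{L^2(\meas_i)}/t^2<\infty$. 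By the sequential weak compactness of $L^2$-bounded sequences, after passing to a subsequence we may assume $h_t^iv_i$ $L^2$-weakly converge to some $w\in L^2(\meas)$ and $\Delta_ih_t^iv_i$ $L^2$-weakly converge to some $\zeta\in L^2(\meas)$.

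Next I would identify the limit. Using the first Mosco inequality (a) applied at any $g\in D(\Ch)$ to obtain recovery sequences $g_i\to g$ $L^2$-strongly with $\limsup_i\Ch^i(g_i)\le\Ch(g)$, together with the pairing property \eqref{eq:nicelaplacian}, i.e. $\Ch^i(f_i)=\int\xi_i f_i\,d\meas_i$ for $\xi_i\in\partial\tfrac12\Ch^i(f_i)$, and the subdifferential inequality $\tfrac12\Ch^i(g_i)\ge\tfrac12\Ch^i(h_t^iv_i)+\int(-\Delta_ih_t^iv_i)(g_i-h_t^iv_i)\,d\meas_i$, I would pass to the limit: the product terms converge by strong$\times$weak convergence, the energy terms are controlled by Mosco's (b) from below and (a) from above, and one deduces $\tfrac12\Ch(g)\ge\tfrac12\liminf_i\Ch^i(h_t^iv_i)+\int(-\zeta)(g-w)\,d\meas$ for all $g\in D(\Ch)$. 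This shows $-\zeta\in\partial\tfrac12\Ch(w)$. Separately, the equation $\tfrac{d}{ds}h_s^iv_i=\Delta_ih_s^iv_i$ integrated against a recovery sequence for a test function, combined with the strong convergence $h_0^iv_i=v_i\to v$, identifies $w$ as the value at time $t$ of the $L^2(\meas)$-gradient flow of $\tfrac12\Ch$ started at $v$; by uniqueness of gradient flows, $w=h_tv$ and $\zeta=\Delta h_tv$. Since the limit does not depend on the subsequence, the full sequence converges.

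To upgrade weak to strong $L^2$-convergence and simultaneously get the energy $\limsup$, I would use \eqref{eq:nicelaplacian} in the form $\Ch^i(h_t^iv_i)=\int(-2\Delta_ih_t^iv_i)(h_t^iv_i)\,d\meas_i$ — more precisely, testing the subdifferential inequality for $\tfrac12\Ch^i$ at $h_t^iv_i$ against the recovery sequence $g_i$ of $g:=h_tv$ gives $\limsup_i\Ch^i(h_t^iv_i)\le\Ch(h_tv)$ once we know that $\int(-\Delta_ih_t^iv_i)(g_i-h_t^iv_i)\,d\meas_i\to\int(-\Delta h_tv)(h_tv-h_tv)\,d\meas=0$; combined with the lower bound $\liminf_i\Ch^i(h_t^iv_i)\ge\Ch(h_tv)$ from Mosco (b) this yields $\Ch^i(h_t^iv_i)\to\Ch(h_tv)$, i.e. the claimed $H^{1,2}$ energy convergence. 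For the $L^2$-strong convergence of $h_t^iv_i$ itself, the cleanest route is to invoke the compactness Theorem~\ref{tcompa}: the sequence $h_t^iv_i$ satisfies the uniform bound in the first condition of \eqref{eq:april30}, and the tightness (second condition) is inherited from that of $v_i$ — indeed $\|h_t^iv_i\|_{L^2(Z\setminus B_R)}$ can be estimated by the $L^2$-contractivity of the heat semigroup against $v_i\mathbf 1_{Z\setminus B_{R/2}}$ plus a term that is small because $h_t^i$ has finite-speed-like $L^2$ mass spreading, or simply because the $L^2$-strong convergence of $v_i$ already forces their tightness which transfers through the contraction; any $L^2$-strong limit point must equal $h_tv$ by the identification above, so the whole sequence converges $L^2$-strongly.

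The main obstacle I anticipate is the identification step: passing to the limit in the subdifferential/gradient-flow relation across variable spaces requires care, because $-\Delta_ih_t^iv_i$ is only known to converge $L^2$-weakly, while we pair it against the difference $g_i-h_t^iv_i$ in which $h_t^iv_i$ is also only weakly convergent — so that particular product is not immediately controlled by strong$\times$weak duality. The resolution is to exploit convexity and $2$-homogeneity: rewrite $\int(-\Delta_ih_t^iv_i)h_t^iv_i\,d\meas_i=\Ch^i(h_t^iv_i)$ exactly via \eqref{eq:nicelaplacian}, so that the only genuinely weak$\times$weak term disappears and is replaced by energies, which are handled by the two Mosco inequalities; the remaining term $\int(-\Delta_ih_t^iv_i)g_i\,d\meas_i$ is strong$\times$weak and converges. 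A secondary technical point is verifying the tightness hypothesis of Theorem~\ref{tcompa} for the heat-evolved functions without assuming \eqref{eq:wlsi}; here one uses that $L^2$-strong convergence of $v_i$ is equivalent to weak convergence plus $L^2$-norm convergence plus uniform integrability/tightness, and that the Markov, order-preserving, $L^2$-contraction property of $h_t^i$ propagates tightness.
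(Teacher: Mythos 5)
There is a genuine gap, and it sits exactly at the point you flag as the ``main obstacle''. To identify a weak limit $-\zeta$ of $-\Delta_i h_t^i v_i$ as an element of $\partial\tfrac12\Ch(w)$ you must pass to the limit in the pairing $\int(-\Delta_i h_t^iv_i)(g_i-h_t^iv_i)\,d\meas_i$, whose second part is weak$\times$weak at this stage. Your proposed fix --- rewriting $\int(-\Delta_i h_t^iv_i)\,h_t^iv_i\,d\meas_i=\Ch^i(h_t^iv_i)$ via \eqref{eq:nicelaplacian} --- flips the sign of that term: the subdifferential inequality becomes $\tfrac12\Ch^i(g_i)+\tfrac12\Ch^i(h_t^iv_i)\ge\int(-\Delta_i h_t^iv_i)\,g_i\,d\meas_i$, so in the limit you need an \emph{upper} bound on $\Ch^i(h_t^iv_i)$, whereas Mosco (b) only gives the lower bound $\liminf_i\Ch^i(h_t^iv_i)\ge\Ch(w)$; the missing upper bound is precisely the conclusion of the lemma, so the argument is circular. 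What you actually obtain is only a Fenchel-type estimate $(\tfrac12\Ch)^*(-\zeta)\le\liminf_i\tfrac12\Ch^i(h_t^iv_i)$, not the inclusion $-\zeta\in\partial\tfrac12\Ch(w)$; the inequality you write, containing $+\tfrac12\liminf_i\Ch^i(h_t^iv_i)$ together with $-\int(-\zeta)w\,d\meas$, tacitly uses $\int(-\Delta_ih_t^iv_i)\,h_t^iv_i\,d\meas_i\to\int(-\zeta)w\,d\meas$, i.e.\ exactly the uncontrolled weak$\times$weak limit. The identification genuinely requires knowing first that $h_t^iv_i\to h_tv$ $L^2$-strongly. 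This is how the paper proceeds: the $L^2$-strong convergence is quoted from \cite[Thm.~6.11]{Gigli-Convergence-2015} (a statement about gradient flows of convex functionals under Mosco convergence, proved via resolvent convergence and needing no compactness, hence valid for $\CD(K,\infty)$); with it, the pairing is strong$\times$weak, any weak limit point of $-\Delta_ih_t^iv_i$ lies in $\partial\tfrac12\Ch(h_tv)$, and \eqref{eq:nicelaplacian} converts this into $\Ch^i(h_t^iv_i)\to\Ch(h_tv)$ --- essentially the computation in your third paragraph, which is correct once strong $L^2$ convergence is in hand.

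Your substitute route to the strong $L^2$ convergence does not close this gap. Invoking Theorem~\ref{tcompa} requires the uniform tightness condition in \eqref{eq:april30}, and your justification is not an argument: the heat flow has infinite speed of propagation, and $L^2$-contractivity says nothing about where the mass of $h_t^iv_i$ sits; moreover the lemma is used in the proof of upper semicontinuity, where \eqref{eq:wlsi} is not assumed, so you cannot fall back on it. Even granting compactness, you would only obtain a subsequence converging strongly to some $w$, and identifying $w=h_tv$ means showing that the limit curve is the gradient flow of $\tfrac12\Ch$ starting from $v$; your sketch (integrating $\tfrac{d}{ds}h^i_sv_i=\Delta_ih^i_sv_i$ against recovery sequences) does not accomplish this, because passing to the limit in the differential inclusion or in the evolution variational inequality leads back to the same subdifferential-identification problem. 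The efficient repair is to cite the stability of gradient flows under Mosco convergence (\cite[Thm.~6.11]{Gigli-Convergence-2015}, or a Brezis--Pazy resolvent argument) for the $L^2$ part, and then run your energy argument for the $H^{1,2}$ part.
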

\begin{proof}
In order to prove strong $H^{1,2}$ convergence, taking the identities \eqref{eq:nicelaplacian} into account,
one has
$$
-\Ch^i(h^i_t v_i)=\int h^i_t v_i\Delta_i h^i_t v_i d\meas_i,\qquad
\Ch(h_t v)=\int h_t v\xi d\meas,
$$
for all $\xi\in \partial\tfrac{1}{2}\Ch(h_t v)$. Hence, it is sufficient to show that any 
$L^2$-weak limit point of $-\Delta_i h^i_t v_i$ $L^2$ belongs to $\partial\tfrac{1}{2}\Ch(h_t v)$ (notice that
the regularization estimate \eqref{eq:Brezis2} gives that $\|\Delta_i h^i_t v_i\|_{L^2(\meas_i)}$ are uniformly bounded). 
This follows at once from 
$$
\frac 12\Ch^i(w)\geq \frac 12\Ch^i(h_t^i v_i)-\int \Delta h^i_tv_i (w-h_t^i v_i)d\meas_i\qquad\forall w\in L^2(\meas_i)
$$
and from the Mosco convergence of $\Ch^i$ to $\Ch$.
\end{proof}

\begin{lemma}
	\label{le:ContructionIsometry}
If $\meas_i$ are not Dirac masses, there exist linear isometries
\[
\pi_i: L^2(\meas) \to L^2(\meas_i) 
\]
with the additional property that 
\[
\pi_i(v_i) \to v\quad\text{strongly in $L^2$, whenever $v_i \to v$ in $L^2(\meas)$.}
\]
The functions $h_t^i\circ\pi_i:L^2(\meas)\to L^2(\meas_i)$ are $1$-Lipschitz and
$1$-homogeneous. In addition, thanks to Lemma~\ref{le:StrongLiftHeatKernel}, 
for all $t>0$ one has the stronger property
\[
\text{$h_t^i \pi_i(v_i) \to h_t v$ strongly in $H^{1,2}$, whenever $v_i \to v$ in $L^2(\meas)$.}
\]
\end{lemma}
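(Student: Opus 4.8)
The plan is to construct the isometries $\pi_i$ explicitly by pushing forward $L^2(\meas)$-functions through a carefully chosen measurable correspondence between $X$ and $X_i$, and then to verify the convergence property by approximating $L^2$-functions by bounded continuous ones. First I would fix a countable dense sequence and use the weak convergence $\meas_i \to \meas$ together with a tightness/coupling argument: since both $\meas$ and $\meas_i$ are atomless Borel probability measures on a common separable metric space (atomlessness follows from the $\CD(K,\infty)$ assumption and the fact that the $\meas_i$ are not Dirac masses, as recalled in the excerpt), there exist measure-isomorphisms of each with the standard atomless probability space $([0,1],\Leb^1)$. Composing, one obtains a Borel measure-isomorphism $T_i:(X,\meas)\to(X_i,\meas_i)$; set $\pi_i(v):=v\circ T_i^{-1}$. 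This is a linear isometry of $L^2(\meas)$ onto $L^2(\meas_i)$ by construction, and $1$-homogeneity of $h_t^i\circ\pi_i$ is immediate from the $1$-homogeneity of the heat flow (its generator $\Delta_i$ being the minimal selection in a subdifferential of a $2$-homogeneous functional), while $1$-Lipschitzness of $h_t^i\circ\pi_i$ follows from the contractivity \eqref{eq:contractivity} of the heat flow combined with the fact that $\pi_i$ is an isometry.

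The delicate point — and the one I expect to be the main obstacle — is arranging that $\pi_i$ is compatible with the \emph{weak/strong convergence across variable measure spaces}, i.e. that $v_i\to v$ in $L^2(\meas)$ implies $\pi_i(v_i)\to v$ strongly in the sense of Section~2 (convergence of $\pi_i(v_i)\meas_i$ to $v\meas$ in duality with $\Cb(Z)$, plus convergence of norms). An arbitrary measure-isomorphism $T_i$ will not do this; one needs $T_i$ to move mass only a small distance. The strategy is: given $\vare>0$, partition $Z$ (or a large ball containing most of the mass) into finitely many Borel pieces $\{A_j\}$ of diameter $<\vare$ with $\meas(\bdry A_j)=0$; by weak convergence $\meas_i(A_j)\to\meas(A_j)$, so after a small correction one can redistribute mass to build $T_i$ mapping (most of) $A_j\cap X$ into a subset of a $\vare$-neighborhood of $A_j$ in $X_i$. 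Then for $\varphi\in\Cbs(Z)$ one estimates $\left|\int \varphi\,\pi_i(v)\,d\meas_i-\int\varphi\, v\,d\meas\right|$ by splitting off the bad set and using uniform continuity of $\varphi$; a diagonal argument over $\vare=1/n$ produces a single sequence $T_i$ that works. The norm convergence $\|\pi_i(v_i)\|_{L^2(\meas_i)}=\|v_i\|_{L^2(\meas)}\to\|v\|_{L^2(\meas)}$ is automatic since $\pi_i$ is an isometry, which upgrades weak to strong convergence once the duality-pairing limit is established for bounded continuous test functions.

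Having the convergence property for $\pi_i$, the final assertion is essentially a corollary: if $v_i\to v$ in $L^2(\meas)$ then $\pi_i(v_i)\to v$ strongly in $L^2$ across the spaces, and applying Lemma~\ref{le:StrongLiftHeatKernel} with the role of "$v_i$" played by $\pi_i(v_i)$ gives $h_t^i\pi_i(v_i)\to h_t v$ strongly in $H^{1,2}$ for every $t>0$. I would also remark that the approximation of general $L^2$-functions by $\Cbs$-functions (density, together with the isometry bound) reduces the verification of the convergence property to the case where the $v_i$ and $v$ are themselves bounded continuous functions on $Z$ restricted to the respective supports, which is the technically cleanest case to handle the mass-transport estimate; the general case then follows by a standard $3\vare$-argument using $\sup_i\|v_i\|_{L^2(\meas_i)}<\infty$.
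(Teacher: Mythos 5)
Your overall strategy is the same as the paper's: push functions through (almost) measure-preserving maps between the spaces that move mass only a short distance, check the duality pairing with $\Cb(Z)$ test functions by uniform continuity, get the norm convergence for free from the isometry property, and then invoke Lemma~\ref{le:StrongLiftHeatKernel} together with $1$-homogeneity and contractivity for the last two assertions. However, there is a genuine gap in how you produce the maps. You claim that both $\meas$ and the $\meas_i$ are atomless and build $T_i$ as a Borel measure \emph{isomorphism} of $(X,\meas)$ with $(X_i,\meas_i)$ via the standard space $([0,1],\Leb^1)$. The hypothesis only excludes the $\meas_i$ from being Dirac masses; the limit measure $\meas$ may very well be a Dirac mass (the sequence can collapse to a point), and then no such isomorphism exists, so your construction breaks down precisely in a case the statement is meant to cover. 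The paper sidesteps this by orienting the transport the other way: it chooses Borel maps $T_i:X_i\to X$ with $(T_i)_\#\meas_i=\meas$ and sets $\pi_i(v)=v\circ T_i$, which is a linear isometry (into, not necessarily onto --- which is all that is needed); existence of such maps, and in fact of $2^{-i}$-almost optimal ones for the truncated cost $\tilde\rho^2=\min\{1,\rho^2\}$, follows from Pratelli's theorem, which only requires the \emph{source} measures $\meas_i$ to be nonatomic. As written, your proof needs the extra assumption that $\meas$ has no atoms, or a separate treatment of the degenerate case.

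The second weak point is that the step upgrading an arbitrary isomorphism to one that ``moves mass only a small distance'' --- the partition of a large ball into sets $A_j$ of diameter $<\varepsilon$ with $\meas(\bdry A_j)=0$ and the subsequent ``redistribution'' --- is exactly the technical heart of the lemma and is only sketched. The $\varepsilon$-neighborhoods of distinct $A_j$ overlap, so allocating $\meas_i$-mass to the various pieces without double counting, and absorbing the discrepancies $|\meas_i(A_j)-\meas(A_j)|$ and the mass outside the ball, amounts to re-proving that weak convergence implies convergence in the Wasserstein distance for the bounded cost $\tilde\rho^2$; turning the resulting plans into maps again uses nonatomicity of the source measure. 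The paper packages precisely these two facts (truncated $W_2$-convergence plus Pratelli) and then concludes with the estimate $\max\{2\sup|\xi|,{\rm Lip}(\xi)\}\,\|\tilde\rho(Id,T_i)\|_{L^2(\meas_i)}\|v_i\|_{L^2}$ for Lipschitz test functions $\xi$ with bounded support; your alternative reduction to bounded continuous $v$'s by density is fine, but to make the argument complete you should either quote these results as the paper does or carry out the allocation argument in full.
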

\begin{proof}
The construction of the linear isometries $\pi_i: L^2(\meas) \to L^2(\meas_i)$ goes as follows.
Since $\meas_i\to\meas$ weakly in $(Z,\rho)$, if we use the bounded cost function 
$\tilde\rho^2(x,y)=\min\{1,\rho^2(x,y)\}$ to define the Wasserstein distance we obtain
$W_2^2( \meas, \meas_i)\to 0$ as $i\to\infty$.
Select now $2^{-i}$-almost optimal transport maps $T_i: X_i \to X$ from $\meas_i$ to $\meas$, namely Borel 
maps satisfying $(T_i)_\#\meas_i=\meas$ and
\[
\int \tilde\rho( x, T_i(x) )^2 d \meas_i (x) 
\leq \frac{1}{2^i} + W_2^2( \meas, \meas_i).
\]
Since the measures $\meas_i$ are nonatomic,
their existence is guaranteed by \cite{Pratelli-Equality-2007}. 
Define the map $\pi_i:L^2(\meas) \to L^2(\meas_i)$ by
\[
\pi_i(v) := v \circ T_i.
\]
Note that $(T_i)_\#\meas_i=\meas$ grants that the map $\pi_i$ is a linear isometry for every $i$.

We claim that when $v_i \to v$ in $L^2(\meas)$, then $\pi_i(v_i) \to v$, strongly in $L^2$.  
According to the definition of $L^2$-strong convergence, we need to verify the following two statements:
\begin{enumerate}
\item \label{i:strongweakpart} For every $\xi \in \Cb(Z)$, 
\[
\int v_i \circ T_i \xi d \meas_i \to \int v \xi d \meas.
\]
\item \label{i:strongstrongpart} The following inequality holds 
\[
\limsup_{i \to \infty} \| \pi_i(v_i) \|_{L^2(\meas_i)} \leq \| v \|_{L^2(\meas)}.
\] 
\end{enumerate}
Since the map $\pi_i$ is an isometry, item (\ref{i:strongstrongpart}) follows immediately.
In order to check (\ref{i:strongweakpart}) we will argue by compactness, since any subsequence
of $v_i\circ T_i$ admits weakly $L^2$-convergent subsequences. By density, it suffices then
to check the property when $\xi$ is Lipschitz and with bounded support. Writing
$$
\int v_i \circ T_i \xi d \meas_i =
\int v_i \circ T_i \xi\circ T_i d \meas_i +
\int v_i \circ T_i (\xi-\xi\circ T_i) d \meas_i,
$$
we notice that the first term corresponds to $\int v_i\xi d\meas$, which obviously converges to
$\int v\xi d\meas$. On the other hand, using the fact that the Lipschitz constant of $\xi$ w.r.t.
$\tilde\rho$ can be estimated with $\max\{2\sup|\xi|,{\rm Lip}(\xi)\}$, the modulus of the second one can 
be estimated from above with
$$
\max\{2\sup|\xi|,{\rm Lip}(\xi)\}\|\tilde\rho(Id,T_i)\|_{L^2(\meas_i)}\|v_i\|_{L^2(\meas)}
$$
which, by our choice of $T_i$, converges to 0.
We have therefore shown that $\pi_i(v_i) \to v$ strongly in $L^2$.

Finally, note that the map $w \mapsto h_t^i w$ is $1$-homogeneous, 
see \cite[Rem.~4.14]{Ambrosio-Gigli-Savare11}, therefore $h_t^i\circ\pi_i$ is 
$1$-homogeneous as well.
\end{proof}

\begin{lemma}
	\label{le:ControlL2} 
Let $V \subset S(L^2(\meas))$ be compact. Then, for every $\epsilon > 0$ there exist $t> 0$ and an integer $i_0$ such that
\begin{equation}\label{eq:mozzano}
\| h_t^i \pi_i(v) - \pi_i(v) \|_{L^2(\meas_i)}   < \epsilon , \text{  for all $v \in V$, $i\geq i_0$.}
\end{equation}
\end{lemma}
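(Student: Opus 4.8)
The plan is to use the compactness of $V$ to reduce the uniform bound \eqref{eq:mozzano} to finitely many functions, and then to combine the strong $L^2$-continuity of $t\mapsto h_tv$ at $t=0$ on the limit space with the strong $L^2$-convergence $h_t^i\pi_i(v)\to h_tv$ furnished by \lemref{le:ContructionIsometry}. The only delicate point is the order of the quantifiers: $t$ must be fixed first, uniformly over a finite net of $V$, and only afterwards the index $i_0$. To make this work I first record a Lipschitz estimate that is uniform in $t$ and $i$. Assuming, as we may, that the $\meas_i$ are not Dirac masses, so that the isometries $\pi_i$ of \lemref{le:ContructionIsometry} are available, put $\Psi_i^t(v):=\|h_t^i\pi_i(v)-\pi_i(v)\|_{L^2(\meas_i)}$. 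Since $\pi_i$ is a linear isometry and $h_t^i\circ\pi_i$ is $1$-Lipschitz (both by \lemref{le:ContructionIsometry}),
\[
|\Psi_i^t(v)-\Psi_i^t(w)|\le\|h_t^i\pi_i(v)-h_t^i\pi_i(w)\|_{L^2(\meas_i)}+\|\pi_i(v)-\pi_i(w)\|_{L^2(\meas_i)}\le 2\|v-w\|_{L^2(\meas)},
\]
a bound independent of $t$ and $i$.

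Fix now $\epsilon>0$ and use compactness of $V$ to choose $v_1,\dots,v_N\in V$ with $V\subset\bigcup_{j}B_{\epsilon/4}(v_j)$, the balls taken in $L^2(\meas)$. By the Lipschitz bound above it is enough to find $t>0$ and $i_0$ such that $\Psi_i^t(v_j)<\epsilon/2$ for all $j\le N$ and all $i\ge i_0$, since then $\Psi_i^t(v)\le\Psi_i^t(v_j)+2\|v-v_j\|_{L^2(\meas)}<\epsilon$ for any $v\in V$ and a nearby $v_j$. As $h_tv_j\to v_j$ in $L^2(\meas)$ when $t\downarrow0$ and there are only finitely many indices, fix $t>0$ with $\|h_tv_j-v_j\|_{L^2(\meas)}<\epsilon/4$ for every $j$. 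With $t$ frozen, \lemref{le:ContructionIsometry} applied to the constant sequences $v_i\equiv v_j$ gives $\pi_i(v_j)\to v_j$ and $h_t^i\pi_i(v_j)\to h_tv_j$ strongly in $L^2$. Expanding the square gives
\[
\Psi_i^t(v_j)^2=\|h_t^i\pi_i(v_j)\|_{L^2(\meas_i)}^2-2\int h_t^i\pi_i(v_j)\,\pi_i(v_j)\,d\meas_i+\|\pi_i(v_j)\|_{L^2(\meas_i)}^2,
\]
and here the first term converges by strong $L^2$-convergence, the last is identically $1$ since $\pi_i$ is an isometry and $v_j\in S(L^2(\meas))$, and the cross term converges because $h_t^i\pi_i(v_j)$ converges strongly while $\pi_i(v_j)$ converges weakly. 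Therefore $\Psi_i^t(v_j)^2\to\|h_tv_j-v_j\|_{L^2(\meas)}^2<\epsilon^2/16$, so there is $i_j$ with $\Psi_i^t(v_j)<\epsilon/2$ for $i\ge i_j$; taking $i_0:=\max_{j\le N}i_j$ concludes.

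I expect the main obstacle to be organizational rather than analytic: all the analytic ingredients --- strong continuity of the heat flow at $t=0$, the $1$-Lipschitz and $1$-homogeneity properties of $h_t^i\circ\pi_i$, and the strong $L^2$-convergence in \lemref{le:ContructionIsometry} (which in turn rests on \lemref{le:StrongLiftHeatKernel}) --- are already available, and the crux is to keep the dependence on $t$ uniform across the finite net so that $t$ may be fixed before letting $i\to\infty$. The one technical point worth flagging is that strong $L^2$-convergence across the variable measures $\meas_i$ does pass to the difference $h_t^i\pi_i(v_j)-\pi_i(v_j)$, which is why I expand $\Psi_i^t(v_j)^2$ rather than arguing directly with the (a priori only weakly convergent) difference.
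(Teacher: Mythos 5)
Your proof is correct and rests on the same ingredients as the paper's own argument: compactness of $V$, the contraction property of the heat semigroups, continuity of $h_t$ at $t=0$, and the strong $L^2$-convergence supplied by \lemref{le:ContructionIsometry}. The only difference is organizational: the paper fixes $t$ uniformly on all of $V$ and then argues by contradiction along a convergent sequence $v_i \to v$ in $V$ (invoking \lemref{le:ContructionIsometry} for variable sequences and the fact that strong convergence passes to differences), whereas you run a direct finite-net argument using the uniform $2$-Lipschitz bound on $\Psi_i^t$, apply the lemma only to constant sequences, and control the difference by expanding the squared norm --- both versions are sound.
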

\begin{proof}
The compactness of $V$, together with the continuity of $h_t$ at $t=0$ and its contractivity grant, for any $\epsilon > 0$,  
the existence of $t > 0$ satisfying
\[
\| h_{t} v - v\|_{L^2(\meas)}  < \epsilon, \text{ for all } v \in V.
\]
We now claim that with this choice of $t > 0$, \eqref{eq:mozzano} holds that for $i_0$ large enough.
Indeed, suppose not. By compactness of $V$, 
there exist a subsequence $(v_i) \subset V$ and $v \in V$ such that $v_i \to v$ in $L^2(\meas)$ and
\[
\| h_{t}^i \pi_i(v_i)  - \pi_i(v_i)\|_{L^2(\meas_i)} \geq \epsilon.
\]
However, $\pi_i(v_i) \to v$ strongly in $L^2$ and therefore also
$h_{t}^i(\pi_i(v_i)) \to h_{t} v$ strongly in $L^2$. 
These facts yield a contradiction.
\end{proof}

\noindent{\bf Proof of the upper semicontinuity in Theorem~\ref{tmain}.}
We can assume that $X_i$ does not consist of a single point for $i$ large enough
(otherwise, also the limit space consists of a single point and we need only to consider those
$i$ for which $X_i$ is not a single point in the argument below).
Let $\delta > 0$ and assume without loss of generality that $\lambda_k(\Ch)<\infty$. 
We will construct sets $V_i \in \mathcal{F}_k(L^2(\meas_i))$ such that
\[
\limsup_{i \to \infty} \sup_{u \in V_i} \Ch^i (u) \leq \lambda_k(\Ch) + 2 \delta,
\]
from which the lemma follows immediately.

By the definition of $\lambda_k(\Ch)$ there exists $V \in \mathcal{F}_k(L^2(\meas))$ such that 
\begin{equation}\label{eq:zan1}
\sup_{u \in V} \Ch(u) < \lambda_k(\Ch) + \delta.
\end{equation}
Let $\epsilon > 0$, so small that $(1 - \epsilon) ( \lambda_k(\Ch) + 2 \delta) >  \lambda_k(\Ch)+\delta$. 
Using Lemma~\ref{le:ControlL2}, we find $t > 0$ and an integer $i_0$ such that
\[
\| h_t^i \pi_i (v) \|_{L^2(\meas_i)} > 1 - \epsilon \quad \text{ for all $v \in V$, $i\geq i_0$.}
\]
Recall that, according to Lemma~\ref{le:ContructionIsometry}, the map $h_t^i \pi_i : L^2(\meas) \to L^2(\meas_i)$ is 
continuous and $1$-homogeneous (in particular, it is odd). 
Therefore 
$$
V_i := \left\{\frac{h_t^i \pi_i(v)}{\|h_t^i\pi_i(v)\|_2}:\ v\in V\right\} 
\in \mathcal{F}_k(L^2(\meas_i))
$$
for all $i\geq i_0$.

We are left to show that
\[
\limsup_{i \to \infty} \sup_{u \in V_i} \Ch^i(u) < \lambda_k(\Ch) + 2 \delta.
\]
We argue by contradiction: suppose that for a subsequence, that we will not denote differently, there exist functions $w_i \in V$ such that
\[
\Ch^i (\frac{h_t^i \pi_i(w_i)}{\|h_t^i\pi_i(w_i)\|_2}) \geq \lambda_k(\Ch) + 2 \delta
\]
and therefore
\[
\Ch^i( h_t^i \pi_i(w_i) ) \geq (1 - \epsilon) ( \lambda_k(\Ch) + 2 \delta) > \lambda_k(\Ch)+\delta.
\]
By compactness, we may also assume without loss of generality that $w_i \to w$ in $L^2(\meas)$
for some $w\in V$, so that Lemma~\ref{le:ContructionIsometry} gives
\[
\Ch^i(h_t^i\pi_i(w_i)) \to \Ch(h_t w).
\]
Since the map $h_t$ decreases the energy, we obtain
\[
\Ch( w ) \geq \Ch(h_t w) = \lim_{i \to \infty} \Ch^i(h_t^i\pi_i(w_i)) 
\geq \lambda_k(\Ch)+\delta,
\]
which is a contradiction with \eqref{eq:zan1}.

\section{Lower semicontinuity of the spectrum}

As in the proof of Lemma~\ref{le:ContructionIsometry}, since $\meas$ is nonatomic we can find 
$2^{-i}$-almost optimal transport maps $S_i: X \to X_i$ from $\meas$ to $\meas_i$ (so that 
$(S_i)_\#\meas=\meas_i$ and $\int\tilde\rho^2(Id,S_i)d\meas\to 0$, with
$\tilde\rho=\min\{1,\rho\}$) 
to define isometries $\sigma_i: L^2(\meas_i) \to L^2(\meas)$ by 
\begin{equation}
\label{eq:DefIsom-L2mi-to-L2m}
\sigma_i (w) = w \circ S_i.
\end{equation}

\begin{lemma}
\label{le:TransferL2WeakConvergence}
Let $f_i \in L^2(\meas_i)$ with $\sup_i\|f_i\|_{L^2(\meas_i)}<\infty$ and $f \in L^2(\meas)$.
Then $f_i \to f$ weakly in $L^2$ if and only if $\sigma_i(f_i) \to f$ weakly in $L^2(\meas)$.
\end{lemma}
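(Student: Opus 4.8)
The plan is to prove Lemma~\ref{le:TransferL2WeakConvergence} by reducing the claim about $L^2$-weak convergence on the variable spaces to a statement about weak convergence of the associated measures $f_i\meas_i$, respectively $\sigma_i(f_i)\meas$, tested against a dense class of functions. Recall that $f_i\to f$ weakly in $L^2$ means, by definition, that $f_i\meas_i\to f\meas$ weakly in duality with $\Cb(Z)$, together with the uniform bound $\sup_i\|f_i\|_{L^2(\meas_i)}<\infty$; similarly $\sigma_i(f_i)\to f$ weakly in $L^2(\meas)$ means $\sigma_i(f_i)\meas\to f\meas$ in duality with $\Cb(Z)$ plus the uniform $L^2(\meas)$-bound. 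Since $\sigma_i$ is a linear isometry, $\|\sigma_i(f_i)\|_{L^2(\meas)}=\|f_i\|_{L^2(\meas_i)}$, so the two uniform bounds are equivalent and it remains only to compare the two convergence statements for the measures.

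First I would fix $\xi\in\Cbs(Z)$ that is moreover Lipschitz; since such $\xi$ are dense in $\Cb(Z)$ for the purposes of testing weak convergence of a tight, bounded family of measures (and the families are automatically tight, being carried by the convergent sequences $\meas_i$, $\meas$), it suffices to establish the equivalence for this class of test functions. The key computation is the identity
\[
\int_{X} \sigma_i(f_i)\,\xi\, d\meas
= \int_{X} (f_i\circ S_i)\,\xi\, d\meas
= \int_{X_i} f_i\, (\xi\circ S_i)\, d\meas_i,
\]
where the last step uses $(S_i)_\#\meas=\meas_i$. I would then compare this with $\int_{X_i} f_i\,\xi\,d\meas_i$ by writing the difference as $\int f_i(\xi\circ S_i - \xi)\,d\meas_i$ and estimating its modulus by
\[
\max\{2\sup|\xi|,\mathrm{Lip}(\xi)\}\,\|\tilde\rho(Id,S_i)\|_{L^2(\meas)}\,\|f_i\|_{L^2(\meas_i)},
\]
exactly as in the proof of Lemma~\ref{le:ContructionIsometry}: the Lipschitz constant of $\xi$ with respect to the bounded distance $\tilde\rho=\min\{1,\rho\}$ is controlled by $\max\{2\sup|\xi|,\mathrm{Lip}(\xi)\}$, and $|\xi\circ S_i(x)-\xi(x)|$ is bounded by that constant times $\tilde\rho(x,S_i(x))$. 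By the defining property of the almost-optimal maps $S_i$, namely $\int\tilde\rho^2(Id,S_i)\,d\meas\to 0$, together with the uniform bound on $\|f_i\|_{L^2(\meas_i)}$, this error term tends to $0$. Hence $\int_{X_i}f_i\,\xi\,d\meas_i$ and $\int_X\sigma_i(f_i)\,\xi\,d\meas$ have the same limit (if either exists), which for $\xi$ in our dense class gives the equivalence of $f_i\meas_i\to f\meas$ and $\sigma_i(f_i)\meas\to f\meas$.

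The only mild subtlety — and the step I would be most careful about — is the passage from the Lipschitz, bounded-support test functions back to all of $\Cb(Z)$ in the definition of weak convergence: one must invoke tightness of the bounded measure families $\{f_i^+\meas_i\},\{f_i^-\meas_i\}$ (or simply of $\{|f_i|\meas_i\}$, which follows from weak convergence of $\meas_i$ and the uniform $L^1$-bound coming from the uniform $L^2$-bound), so that approximating a general $\xi\in\Cb(Z)$ uniformly on a large compact set by a Lipschitz function with bounded support introduces only a small error. This is routine but worth stating explicitly. Everything else is bookkeeping: the argument is symmetric in the two directions, so no separate proof of the converse implication is needed.
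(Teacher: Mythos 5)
Your proof is correct and takes essentially the same route as the paper: both reduce to Lipschitz test functions $\xi$ with bounded support (you justify this via tightness of $|f_i|\meas_i$, the paper via the compactness properties of weak $L^2$ convergence), use $(S_i)_\#\meas=\meas_i$ to relate the two pairings, and kill the discrepancy with the bound $\max\{2\sup|\xi|,\mathrm{Lip}(\xi)\}\,\|\tilde\rho(\mathrm{Id},S_i)\|_{L^2(\meas)}\,\|f_i\|_{L^2(\meas_i)}\to 0$. The only blemish is notational: your displayed identity $\int_X (f_i\circ S_i)\,\xi\,d\meas=\int_{X_i} f_i\,(\xi\circ S_i)\,d\meas_i$ is ill-formed, since $\xi\circ S_i$ is a function on $X$ and $S_i$ need not be invertible; the change of variables actually gives $\int_{X_i} f_i\,\xi\,d\meas_i=\int_X (f_i\circ S_i)(\xi\circ S_i)\,d\meas$, so the quantity to estimate is $\int_X (f_i\circ S_i)(\xi-\xi\circ S_i)\,d\meas$, which is exactly what your stated bound (Cauchy--Schwarz in $L^2(\meas)$ plus the Lipschitz estimate with respect to $\tilde\rho$) controls, so the argument goes through unchanged.
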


\begin{proof}
Thanks to the compactness properties of weak $L^2$ convergence, we need only
to test the convergence in duality with Lipschitz functions with bounded support $\xi:Z\to\mathbb{R}$.

We will show below the convergence
\begin{equation}
\label{eq:ReplaceXi}
\int f_i\circ S_i \xi d \meas - \int f_i \circ S_i \xi \circ S_i d \meas \to 0
\end{equation}
as $i\to \infty$. Given this fact, 
\[
\int f_i\circ S_i  \xi d \meas \to \int f \xi d \meas
\] 
as $i \to \infty$ if and only if
\[
\int f_i\xi d\meas_i= \int f_i\circ S_i \xi\circ S_i d \meas \to \int f \xi d \meas
\]
as $i\to \infty$, from which the Lemma follows.
 
The convergence (\ref{eq:ReplaceXi}) follows at once by estimating, as we did in the proof
of Lemma~\ref{le:ContructionIsometry}, the difference with
$$
\max\{2\sup|\xi|,{\rm Lip}(\xi)\}\|\tilde\rho(Id,S_i)\|_{L^2(\meas)}\|f_i\|_{L^2(\meas_i)}
$$ 
and using the uniform boundedness of the $L^2$ norms of $f_i$.
\end{proof}

\begin{corollary}
	\label{co:TransferL2StrongConvergence}
	Let $f_i \in L^2(\meas_i)$ and $f \in L^2(\meas)$.
	Then $f_i \to f$ strongly in $L^2$ if and only if $\sigma_i(f_i) \to f$ strongly in $L^2(\meas)$.
\end{corollary}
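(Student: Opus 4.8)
The plan is to reduce the statement to Lemma~\ref{le:TransferL2WeakConvergence}, which already handles the weak part of the equivalence, and then to observe that the isometry property of $\sigma_i$ takes care of the norm condition in the definition of strong $L^2$-convergence. Recall that $\sigma_i(w) = w\circ S_i$ with $(S_i)_\#\meas = \meas_i$, so that $\|\sigma_i(w)\|_{L^2(\meas)} = \|w\|_{L^2(\meas_i)}$ for every $w\in L^2(\meas_i)$; in particular $\sup_i\|f_i\|_{L^2(\meas_i)}<\infty$ if and only if $\sup_i\|\sigma_i(f_i)\|_{L^2(\meas)}<\infty$, and $\limsup_i\|f_i\|_{L^2(\meas_i)} = \limsup_i\|\sigma_i(f_i)\|_{L^2(\meas)}$.

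First I would assume $f_i\to f$ strongly in $L^2$. By definition this means $f_i\to f$ weakly in $L^2$ together with $\limsup_i\|f_i\|_{L^2(\meas_i)}\le\|f\|_{L^2(\meas)}$. Applying Lemma~\ref{le:TransferL2WeakConvergence} (whose hypothesis $\sup_i\|f_i\|_{L^2(\meas_i)}<\infty$ is automatic from weak convergence), we get $\sigma_i(f_i)\to f$ weakly in $L^2(\meas)$. Combining the isometry identity with the norm bound gives $\limsup_i\|\sigma_i(f_i)\|_{L^2(\meas)} = \limsup_i\|f_i\|_{L^2(\meas_i)}\le\|f\|_{L^2(\meas)}$, which is precisely the extra condition needed for strong convergence. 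Hence $\sigma_i(f_i)\to f$ strongly in $L^2(\meas)$. The converse direction is entirely symmetric: if $\sigma_i(f_i)\to f$ strongly in $L^2(\meas)$, then $\sup_i\|\sigma_i(f_i)\|_{L^2(\meas)}<\infty$, so $\sup_i\|f_i\|_{L^2(\meas_i)}<\infty$, Lemma~\ref{le:TransferL2WeakConvergence} yields $f_i\to f$ weakly in $L^2$, and the isometry identity transfers the $\limsup$ inequality back to $\limsup_i\|f_i\|_{L^2(\meas_i)}\le\|f\|_{L^2(\meas)}$.

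There is essentially no obstacle here: the content is all in Lemma~\ref{le:TransferL2WeakConvergence}, and the only additional ingredient is that $\sigma_i$ preserves $L^2$-norms exactly, so the $\limsup$-of-norms condition passes through verbatim in both directions. The one point worth stating carefully is that $\|\sigma_i(f_i)\|_{L^2(\meas)}=\|f_i\|_{L^2(\meas_i)}$ follows from $(S_i)_\#\meas=\meas_i$ by the change-of-variables formula $\int (f_i\circ S_i)^2\,d\meas = \int f_i^2\,d(S_i)_\#\meas = \int f_i^2\,d\meas_i$.
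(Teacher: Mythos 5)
Your proof is correct and is essentially the paper's own argument: the paper likewise deduces the corollary from Lemma~\ref{le:TransferL2WeakConvergence} together with the fact that the maps $\sigma_i$ are isometries, and you have merely spelled out how the isometry identity transfers the $\limsup$-of-norms condition in both directions. Nothing is missing.
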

\begin{proof}
This follows from the previous Lemma~\ref{le:TransferL2WeakConvergence}, and from
the fact that the maps $\sigma_i: L^2( \meas_i) \to L^2(\meas)$ are isometries.
\end{proof}

\begin{lemma}
	\label{le:UniformTotalBoundedness}
	Assume that \eqref{eq:wlsi} holds with $A,\,B\geq 0$, $\bar x\in Z$ independent of $i$. 
	Then, for all $s,\,t\geq 0$ the sublevel sets 
\begin{equation}\label{eq:mozzano1}
	E_i^{s,t}:=\left\{u \in L^2(\meas_i) : \ \|u\|_{L^2(\meas_i)} \le s,\,\,\Ch^i(u) \le t \right\} 
\end{equation}
 are uniformly totally bounded in $L^2(\meas_i)$. 
	That is, for every $\epsilon > 0$ there exists an integer $N$ such that for every 
	$i \in \mathbb{N}$ one can find points $p^i_1, \ldots, p^i_N \in E_i^{s,t}$ satisfying
	\[
	E_i^{s,t} \subset \bigcup_{j=1}^N B_\epsilon(p^i_j).
	\]
\end{lemma}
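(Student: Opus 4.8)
The plan is to argue by contradiction after reducing to a uniform bound on packing numbers. For a subset $A$ of a metric space write $P(A,\epsilon)$ for the supremum of the cardinalities of $\epsilon$-separated subsets of $A$; since a maximal (by inclusion) $\epsilon$-separated subset $S$ of $A$ is an $\epsilon$-net with $S\subset A$ and $\#S\le P(A,\epsilon)$, a uniform bound $P(E_i^{s,t},\epsilon)\le N$ immediately yields the statement of the lemma. So it suffices to produce, for each $\epsilon>0$, some $N=N(\epsilon,s,t)$ with $P(E_i^{s,t},\epsilon)\le N$ for all $i$. Assuming this fails for some $\epsilon>0$, for every $m$ there are an index $i_m$ and an $\epsilon$-separated family $\{u_1^m,\dots,u_m^m\}\subset E_{i_m}^{s,t}$ in $L^2(\meas_{i_m})$. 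Passing to a subsequence in $m$, either $i_m$ is constantly equal to some $i^\ast$, or $i_m\to\infty$. In the first case $P(E_{i^\ast}^{s,t},\epsilon)$ is unbounded, so $E_{i^\ast}^{s,t}$ is not totally bounded; but $E_{i^\ast}^{s,t}$ is compact by Theorem~\ref{tcompa} applied to the constant sequence $(X_{i^\ast},d_{i^\ast},\meas_{i^\ast})$ — where \eqref{eq:wlsi} holds by hypothesis — a contradiction. Hence I may assume $i_m\to\infty$; then $\meas_{i_m}\to\meas$ weakly, the compactness Theorem~\ref{tcompa} applies along this subsequence of spaces, and \eqref{eq:wlsi} still holds along it with the same $A,B,\bar x$.

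Next I would run a diagonal extraction. For fixed $j$ the functions $u_j^m\in E_{i_m}^{s,t}$ ($m\ge j$) satisfy $\|u_j^m\|_{L^2(\meas_{i_m})}^2+\Ch^{i_m}(u_j^m)\le s^2+t$, so Theorem~\ref{tcompa}, whose decay hypothesis is supplied by \eqref{eq:wlsi}, provides an $L^2$-strongly convergent subsequence of $(u_j^m)_m$. Diagonalizing over $j$ gives a single subsequence $(m_\ell)$ along which $u_j^{m_\ell}\to v_j$ strongly in $L^2$ for every $j$, with $v_j\in L^2(\meas)$. To pass the separation to the limit, note that for $j\ne j'$ and $\ell$ large
\[
\|u_j^{m_\ell}-u_{j'}^{m_\ell}\|_{L^2(\meas_{i_{m_\ell}})}^2
=\|u_j^{m_\ell}\|_{L^2(\meas_{i_{m_\ell}})}^2
-2\int u_j^{m_\ell}u_{j'}^{m_\ell}\,d\meas_{i_{m_\ell}}
+\|u_{j'}^{m_\ell}\|_{L^2(\meas_{i_{m_\ell}})}^2,
\]
and strong convergence of $(u_j^{m_\ell})$ and $(u_{j'}^{m_\ell})$, together with the pairing $\int f_ig_i\,d\meas_i\to\int fg\,d\meas$ for $f_i$ converging strongly and $g_i$ weakly in $L^2$ recalled in the preliminaries, forces the right-hand side to converge to $\|v_j-v_{j'}\|_{L^2(\meas)}^2$. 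Since the left-hand side is $\ge\epsilon^2$, the $v_j$ form an infinite $\epsilon$-separated subset of $L^2(\meas)$.

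Finally I would check that the $v_j$ all lie in a compact subset of $L^2(\meas)$, contradicting the previous step. Strong convergence gives $\|v_j\|_{L^2(\meas)}=\lim_\ell\|u_j^{m_\ell}\|_{L^2(\meas_{i_{m_\ell}})}\le s$, and property (b) of the Mosco convergence of $\Ch^i$ to $\Ch$ gives $\Ch(v_j)\le\liminf_\ell\Ch^{i_{m_\ell}}(u_j^{m_\ell})\le t$. For the weighted norm, put $\varphi_R:=\rho^2(\cdot,\bar x)\wedge R$, which is bounded and continuous: strong $L^2$-convergence yields $\int\varphi_R(u_j^{m_\ell})^2\,d\meas_{i_{m_\ell}}\to\int\varphi_R\,v_j^2\,d\meas$, while by \eqref{eq:wlsi} for $\meas_{i_{m_\ell}}$ the left-hand side is at most $\int\rho^2(\cdot,\bar x)(u_j^{m_\ell})^2\,d\meas_{i_{m_\ell}}\le As^2+Bt$; letting $R\to\infty$ and using monotone convergence gives $\int\rho^2(\cdot,\bar x)v_j^2\,d\meas\le As^2+Bt$. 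Hence the sequence $(v_j)_j$ satisfies the hypotheses \eqref{eq:april30} of Theorem~\ref{tcompa} for the constant space $(X,d,\meas)$ — the uniform bound on the weighted norm yielding the uniform decay at infinity — so it admits a subsequence converging strongly in $L^2(\meas)$, i.e.\ (the underlying space being fixed) in norm, contradicting the $\epsilon$-separation. I expect the main obstacle to be precisely this limiting step: transferring the $L^2$-bound, the Cheeger-energy bound (via Mosco convergence), and, most delicately, the bound on the \emph{unbounded} weight $\int\rho^2u^2\,d\meas_i$ — which is why the truncation $\varphi_R$ is needed — across the varying measures, while simultaneously passing the separation estimate to the limit through the strong/weak pairing; the reduction to packing numbers and the diagonal bookkeeping are routine.
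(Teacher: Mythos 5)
Your proof is correct and follows essentially the same route as the paper's: argue by contradiction, use a diagonal extraction together with the compactness Theorem~\ref{tcompa} to obtain strong $L^2$-limits of the separated points, pass the $\epsilon$-separation and the norm/energy bounds to the limit, and contradict compactness in the fixed space $L^2(\meas)$. The only (harmless) deviations are that you transfer the separation through the strong/weak pairing rather than through the isometries $\sigma_i$ of Corollary~\ref{co:TransferL2StrongConvergence}, and that you verify the decay condition \eqref{eq:april30} for the limit functions directly via the truncated weight instead of simply invoking the compactness of $E^{s,t}$.
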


\begin{proof}
	Suppose not, then there exist $\epsilon > 0$, a subsequence (that we do not relabel) and points 
	$p_j^i \in E_i^{s,t}$, $j = 1, \ldots, N(i)$, with $N(i)\to\infty$ as $i\to\infty$, such that 
	\[
	\| p_j^i - p_k^i  \|_{L^2(\meas_i)} > \epsilon\qquad\forall 1\leq j<k\leq N(i).
	\]
	Since $j\leq N(i)\to\infty$, by a diagonal construction and the 
	compactness result \cite[Thm.~6.3]{Gigli-Convergence-2015} stated in Theorem~\ref{tcompa}, we may assume that for all $j$ fixed the functions
	$p_j^i\in L^2(\meas_i)$ $L^2$ strongly converge to $p_j \in L^2(\meas)$ as $i\to\infty$, and thus by Corollary~\ref{co:TransferL2StrongConvergence}
	we obtain that $\sigma_i(p_j^i) \to p_j$
	strongly in $L^2(\meas)$. Moreover the $\liminf$-inequality in the Mosco convergence yields $p_j \in E^{s,t}$,
	with $E^{s,t}$ as in \eqref{eq:mozzano6}.
	
Then we have $\|p_j-p_k\|_{L^2(\meas)}\ge \epsilon$ for all $j \neq k$, which contradicts the compactness of $E^{s,t}$ with respect to the $L^2(\meas)$-norm.
\end{proof}

\begin{lemma}
	\label{le:HausdorffCompactness}
Assume that \eqref{eq:wlsi} holds with $A,\,B\geq 0$, $\bar x\in Z$ independent of $i$. 
Let $V_i \subset E_i^{s,t}$, with $E_i^{s,t}$ as in \eqref{eq:mozzano1}. 
Then there exist a subsequence $i(j)$ and a compact subset $V \subset L^2(\meas)$ such that 
\[
\sigma_{i(j)} (V_{i(j)}) \to V\quad\text{in $L^2(\meas)$}
\]
in the Hausdorff distance. 
\end{lemma}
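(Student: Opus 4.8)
The plan is to push all the sets forward into the single Hilbert space $L^2(\meas)$ by means of the isometries $\sigma_i$, to produce one compact subset of $L^2(\meas)$ that contains every image $\sigma_i(V_i)$, and then to apply the classical Blaschke selection theorem for the Hausdorff distance. We may assume that all the $V_i$ are nonempty (if $V_i=\emptyset$ for infinitely many $i$, pass to that subsequence and take $V=\emptyset$; otherwise discard the finitely many empty members).

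First I would recall that, applying the compactness Theorem~\ref{tcompa} to the constant sequence $(X_i,d_i,\meas_i)$ and using that \eqref{eq:wlsi} holds with constants independent of $i$, each sublevel set $E_i^{s,t}$ is compact in $L^2(\meas_i)$, hence so is its continuous image $\sigma_i(E_i^{s,t})\subset L^2(\meas)$. The key step is then to show that
\[
A:=\overline{\bigcup_{i}\sigma_i\bigl(E_i^{s,t}\bigr)}
\]
is compact in $L^2(\meas)$. For this it is enough to extract a convergent subsequence from an arbitrary sequence $g_m=\sigma_{i_m}(f_m)$ with $f_m\in E_{i_m}^{s,t}$. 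If some index occurs infinitely often among the $i_m$, the corresponding subsequence lies in one compact set $\sigma_i(E_i^{s,t})$ and converges along a further subsequence. Otherwise we may assume $i_m\to\infty$; then $\sup_m\bigl(\|f_m\|_{L^2(\meas_{i_m})}^2+\Ch^{i_m}(f_m)\bigr)\le s^2+t$, and \eqref{eq:wlsi} supplies exactly the tightness required in \eqref{eq:april30}, so Theorem~\ref{tcompa} gives an $L^2$-strongly convergent subsequence of $(f_m)$; by Corollary~\ref{co:TransferL2StrongConvergence} the corresponding subsequence of $(g_m)$ converges strongly in $L^2(\meas)$. Thus $A$ is sequentially, hence genuinely, compact. (Alternatively, Lemma~\ref{le:UniformTotalBoundedness} together with the same argument applied to the centres of the finitely many covering balls encodes the same conclusion.)

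Finally, for each $i$ one has $\sigma_i(V_i)\subset\sigma_i(E_i^{s,t})\subset A$, so $\overline{\sigma_i(V_i)}$ is a nonempty closed subset of the compact metric space $A$, hence a nonempty compact subset of $A$. By the Blaschke selection theorem (compactness of the space of nonempty compact subsets of a compact metric space under the Hausdorff distance), the sequence $\bigl(\overline{\sigma_i(V_i)}\bigr)_i$ has a subsequence $i(j)$ converging in the Hausdorff distance to a nonempty compact set $V\subset A\subset L^2(\meas)$; since a set and its closure are at Hausdorff distance zero, $\sigma_{i(j)}(V_{i(j)})\to V$ in the Hausdorff distance as well, which is the claim. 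The main obstacle is the second step: one must rule out the escape to infinity of the transferred sublevel sets in $L^2(\meas)$, and this is exactly what Theorem~\ref{tcompa} delivers under the hypothesis \eqref{eq:wlsi}, via the transfer Corollary~\ref{co:TransferL2StrongConvergence}; everything else is the classical compactness of the hyperspace of a compact metric space.
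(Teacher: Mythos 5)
Your argument is correct, but it takes a different route than the paper. The paper invokes the uniform total boundedness of the sublevel sets $E_i^{s,t}$ (Lemma~\ref{le:UniformTotalBoundedness}) and then builds the limit set explicitly: it picks $\epsilon_k$-nets $p^i_{j,k}$ of $V_i$, uses Theorem~\ref{tcompa} together with Corollary~\ref{co:TransferL2StrongConvergence} and a diagonal extraction to make the transferred net points converge to points $p_{j,k}$, and defines $V:=\bigcap_k\bigcup_j\overline{B}_{1/k}(p_{j,k})$; in effect it re-proves a Blaschke-type selection principle adapted to the variable-space setting. You instead assemble a single compact ambient set $A=\overline{\bigcup_i\sigma_i(E_i^{s,t})}\subset L^2(\meas)$ --- whose compactness you correctly derive from the same two ingredients, Theorem~\ref{tcompa} (applied both to constant and to diagonal sequences, which is legitimate since any subsequence of the spaces still converges with the same constants in \eqref{eq:wlsi}) and the transfer Corollary~\ref{co:TransferL2StrongConvergence} --- and then quote the classical compactness of the hyperspace of nonempty compact subsets of a compact metric space under the Hausdorff distance, passing to closures and using that a set and its closure are at Hausdorff distance zero. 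What your route buys is a clean reduction to a standard theorem, bypassing Lemma~\ref{le:UniformTotalBoundedness} altogether and avoiding the explicit net construction; what the paper's route buys is self-containedness (no external selection theorem) and an explicit description of $V$ as an intersection of finite unions of balls, obtained by recycling the lemma it had already proved. Your handling of the degenerate empty-set case and of the fact that the $V_i$ need not be closed (taking closures inside $A$) are small but welcome precisions that the paper leaves implicit.
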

\begin{proof}
As we have established the uniform total boundedness of the sublevel sets $E_i^{s,t}$ in 
Lemma~\ref{le:UniformTotalBoundedness}, the proof now follows from a standard construction:
For $\epsilon_k = 1/k$ we select a sequence $\epsilon_k$-nets of $V_i$ as provided by 
Lemma~\ref{le:UniformTotalBoundedness}, that is points $p_{j,k}^i \in V_i$, $j = 1, \ldots, N_k$, such that
\[
V_i \subset \bigcup_{j=1}^{N_k} B_{\epsilon_k} (p_{j,k}^i).
\]
As in the proof of Lemma~\ref{le:UniformTotalBoundedness}, for fixed $k$, the sequences of images under $\sigma_i$ of these $\epsilon_k$-nets is strongly compact in $L^2(\meas)$. Using the limit points $p_{j,k}$
of $p^i_{j,k}$ along a suitable subsequence of indices $i$, one can define
$$
V:=\bigcap_{k=1}^\infty\bigcup_{j=1}^{N_k}\overline{B}_{1/k}(p_{j,k}).
$$
\end{proof}

\noindent{\bf Proof of the lower semicontinuity in Theorem~\ref{tmain}.} Let 
$V_i$ be compact sets in $L^2(\meas_i)$ such that $V_i \subset S(L^2(\meas_i))$ and 
\[
\sup_{u \in V_i}  \Ch^i(u) \leq \lambda_k(\Ch_i) + 1/i.
\]
Let $\sigma_i: L^2(\meas_i) \to L^2(\meas)$ be the linear isometries defined by \eqref{eq:DefIsom-L2mi-to-L2m}.
By Lemma~\ref{le:HausdorffCompactness}, there exist a compact subset $V \subset L^2(\meas)$ and a subsequence 
such that $\sigma_i(V_i) \to V$ in the Hausdorff distance. 
Since the sets $V_i$ are symmetric, the set $V$ is symmetric as well.
Therefore, by Proposition~\ref{pr:SemicontinuityGenus}, $\gamma(V) \geq \limsup_{i \to \infty} \gamma(V_i) \geq k$. 
Hence $V \in \mathcal{F}_k(L^2(\meas))$.

Moreover, it follows from Corollary~\ref{co:TransferL2StrongConvergence} that for every $v \in V$, 
there exists a sequence $v_i \in V_i$  strongly converging to $v$ in $L^2$.

By the $\liminf$-inequality in the Mosco convergence,
\[
\liminf_{i \to \infty} \Ch^i(v_i) \geq \Ch(v).
\]
Hence,
\[
\liminf_{i \to \infty} \sup_{ u \in V_i} \Ch^i(u) \geq \sup_{v \in V} \Ch(v),
\]
which, taking our choice of $V_i$ into account, immediately gives \eqref{eq:mozzano4}.

\section{Existence of eigenfunctions and eigenvalues}

\subsection{Gradient flow of Cheeger energy on sphere}

In this section we note the well-posedness of the gradient flow of Cheeger's energy \emph{on the sphere} $S(L^2(\meas))$,
the latter denoted by $S$ for simplicity of notation. In this section, we shall also denote by $\overline{B}$ the closed unit ball of
$L^2(\meas)$ and denote by $\|\cdot\|$ the norm of $L^2(\meas)$, by $\langle\cdot,\cdot\rangle$ the scalar product.

For a constant $M > 0$ we denoted by $E^{1,M}$ the sublevel set of the energy intersected with $\overline{B}$, namely
\begin{equation}\label{eq:SublevelSetCheegerEnergy2}
E^{1,M} := \{ u \in \overline{B}:\ \Ch(u) \leq M \}.
\end{equation}
We consider the functional $\Phi:L^2(\meas)\to [0,\infty]$ defined by
$$
\Phi(u):=
\begin{cases}
\Ch(u) &\text{if $u\in E^{1,M}$;}
\cr\cr
+\infty &\text{otherwise.}
\end{cases}
$$
Since we are going to study $\Phi$ only with a specific choice of the constant $M$ (specifically any
$M=\lambda+1$, with $\lambda$ the energy level we are interested in) we will not emphasize the
dependence of $\Phi$ on this constant. 

Since $E^{1,M}$ is closed and convex, it is easily seen that $\Phi_L:=\Phi-L\|\cdot\|^2$ is $(-2L)$-convex
and lower semicontinuous. In addition, the closure of the finiteness domain of $\Phi_L$ is $E^{1,M}$.
Therefore, the theory of gradient flows for semiconvex and lower semicontinuous
functionals applies, and provides a unique continuous gradient semigroup ${\sf S}_t$ of
$\frac 12\Phi_L$ on $E^{1,M}$, namely a locally absolutely continuous map $u(t)=:{\sf S}_tu$ in $(0,\infty)$
satisfying $u(t)\to u=:{\sf S}_0u$ as $t\to 0$ and 
$$
u'(t)\in -\partial_F\frac 12\Phi_L(u(t)) \qquad\text{for $\Leb^1$-a.e. $t>0$,}
$$
where  $\partial_F \frac 12\Phi_L(u)$ is the Frechet subdifferential of $\frac 12\Phi_L$ at $u$, defined in \eqref{def:frediff}.

We recall that semigroup ${\sf S}_t$ satisfies the contractivity property \eqref{eq:contractivity}, as well as
the regularizing properties \eqref{eq:regularizing1}, \eqref{eq:regularizing2}.

\begin{lemma} \label{lem:differentials} For all $u\in\overline{B}\setminus S$ with $\Ch(u)<M$ one has
	\begin{equation}\label{eq:first_subdifferential}
	\partial_F\Phi_L(u)=\partial\Ch(u)-2Lu\ni -2(\Delta u+Lu)
	\end{equation}
    while for all $u \in S$ with $\Ch(u)<M$ it holds that
	\begin{equation}
	\partial_F \Phi_L(u) = \partial \Phi(u) - 2 L u = \partial \Ch(u) - 2 L u + \{ 2 \mu u : \ \mu \geq 0 \}.
	\end{equation}
	As a consequence, for all $u \in S$ with $\Ch(u) < M$, the element of minimal norm in $\partial_F \Phi_L(u)$ is given by $- \Delta u - \Ch(u) u$ and $-\Delta u=\Ch(u)u$ whenever $|\partial\Phi_L|(u)=0$.
\end{lemma}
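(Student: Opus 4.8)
The plan is to peel off the smooth quadratic term, show that at the relevant points the energy constraint $\Ch\le M$ is locally inactive so that $\Phi$ reduces near $u$ to $\Ch$ plus the indicator of the ball $\overline B$, and finally to compute the minimal selection by diagonalizing the norm on the subdifferential using the $2$-homogeneity of $\Ch$.

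\emph{Step 1 (reduction to $\Phi$).} Since $u\mapsto -L\|u\|^2$ is everywhere Fr\'echet differentiable with gradient $-2Lu$, the definition \eqref{def:frediff} gives $\partial_F\Phi_L(u)=\partial_F\Phi(u)-2Lu$ at every $u$. Writing $\iota_C$ for the convex indicator of a set $C$ ($0$ on $C$, $+\infty$ off $C$), we have $\Phi=\Ch+\iota_{E^{1,M}}$, which is convex (because $E^{1,M}$ is convex) and lower semicontinuous; hence by monotonicity of difference quotients $\partial_F\Phi=\partial\Phi$. So it remains to identify $\partial\Phi(u)$ for $u$ with $\Ch(u)<M$.

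\emph{Step 2 (the energy constraint is inactive).} Fix $\xi\in\partial\Phi(u)$ and an arbitrary $v\in\overline B$; I claim $\Ch(v)\ge\Ch(u)+\langle\xi,v-u\rangle$. This is trivial if $\Ch(v)=\infty$, so assume $\Ch(v)<\infty$ and put $u_\theta:=(1-\theta)u+\theta v$. Then $u_\theta\in\overline B$ by convexity, and $\Ch(u_\theta)\le(1-\theta)\Ch(u)+\theta\Ch(v)\to\Ch(u)<M$ as $\theta\downarrow0$, so $u_\theta\in E^{1,M}$ for small $\theta>0$; thus $\Phi(u_\theta)=\Ch(u_\theta)$, and the subgradient inequality for $\Phi$ combined with the monotonicity of $\theta\mapsto\theta^{-1}\bigl(\Ch(u_\theta)-\Ch(u)\bigr)$ (whose value at $\theta=1$ is $\Ch(v)-\Ch(u)$) yields the claim. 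As the reverse inclusion is immediate from $E^{1,M}\subseteq\overline B$, we get $\partial\Phi(u)=\partial(\Ch+\iota_{\overline B})(u)$. Now the Moreau--Rockafellar sum rule applies, $\iota_{\overline B}$ being finite and continuous at $0$, which lies in $D(\Ch)$ and in the interior of $\overline B$; hence $\partial\Phi(u)=\partial\Ch(u)+\partial\iota_{\overline B}(u)$, with $\partial\iota_{\overline B}(u)=\{0\}$ if $\|u\|<1$ and $\partial\iota_{\overline B}(u)=\{2\mu u:\mu\ge0\}$ if $\|u\|=1$. Since $-2\Delta u\in\partial\Ch(u)$ (because $-\Delta u$ is the minimal-norm element of $\partial\tfrac12\Ch(u)$ and $\partial\Ch=2\,\partial\tfrac12\Ch$), Step 1 gives \eqref{eq:first_subdifferential} and the second displayed identity.

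\emph{Step 3 (minimal selection and the eigenvalue equation).} For $u\in S$ with $\Ch(u)<M$ every element of $\partial_F\tfrac12\Phi_L(u)=\partial\tfrac12\Ch(u)-Lu+\{\mu u:\mu\ge0\}$ has the form $\zeta=\xi+(\mu-L)u$ with $\xi\in\partial\tfrac12\Ch(u)$ and $\mu\ge0$. Using $\|u\|=1$ and identity \eqref{eq:nicelaplacian} in the form $\langle\xi,u\rangle=\Ch(u)$ for every such $\xi$, one gets $\|\zeta\|^2=\|\xi\|^2+2(\mu-L)\Ch(u)+(\mu-L)^2$, which decouples: the minimum over $\xi\in\partial\tfrac12\Ch(u)$ is attained at $\xi=-\Delta u$, and the minimum over $\mu\ge0$ at $\mu=L-\Ch(u)$, which is admissible since $L$ may be taken $\ge M>\Ch(u)$. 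Hence the minimal-norm element is $-\Delta u-\Ch(u)u$ (so the minimal element of $\partial_F\Phi_L(u)$ is twice this). Finally, by \eqref{eq:RepresentationLocalSlop2}, $|\partial\Phi_L|(u)=0$ holds exactly when $0\in\partial_F\Phi_L(u)$, i.e. when the minimal element vanishes, which reads $-\Delta u=\Ch(u)u$.

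\emph{Main obstacle.} The delicate point is Step 2: because $\Ch$ is merely lower semicontinuous, one cannot directly apply a Lagrange-multiplier rule to the two constraints $\{\|\cdot\|\le1\}$ and $\{\Ch\le M\}$ simultaneously. The convex-combination argument, which uses the strict inequality $\Ch(u)<M$ to push the comparison points $u_\theta$ back into $E^{1,M}$, is precisely what lets one discard the energy constraint and reduce to the single constraint set $\overline B$, whose interior meets $D(\Ch)$, so that the classical Moreau--Rockafellar theorem becomes applicable.
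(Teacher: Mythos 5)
Your proof is correct, and for the crucial case $u\in S$ it takes a genuinely different route from the paper. The paper identifies $\partial\Phi(u)$ on the sphere by a direct computation exploiting the $2$-homogeneity of $\Ch$: given $\zeta\in\partial\Phi(u)$, it expands $\Ch(u+tw)=\|u+tw\|^2\,\Ch\bigl((u+tw)/\|u+tw\|\bigr)$ along normalized perturbations to show that $\zeta-\langle\zeta,u\rangle u+2\Ch(u)u\in\partial\Ch(u)$, and uses the radial variation $(1-\epsilon)u$ to get $\langle\zeta,u\rangle\geq 2\Ch(u)$, whence $\zeta\in\partial\Ch(u)+\{2\mu u:\ \mu\geq 0\}$ (the easy reverse inclusion is left implicit there). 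You instead show that the energy constraint $\Ch\leq M$ is locally inactive --- your convex-combination argument is exactly the paper's argument for the interior case $\|u\|<1$, pushed to cover $u\in S$ as well --- so that $\partial\Phi(u)=\partial(\Ch+\iota_{\overline{B}})(u)$ with $\iota_{\overline{B}}$ the indicator of the ball, and then invoke the Moreau--Rockafellar sum rule (legitimately: $\iota_{\overline{B}}$ is continuous at $0\in D(\Ch)$), the ray $\{2\mu u:\ \mu\geq 0\}$ appearing as the normal cone to $\overline{B}$ at $u$; homogeneity enters your argument only through \eqref{eq:nicelaplacian} in the minimal-selection step, where your ``decoupling'' of $\|\zeta\|^2$ is the paper's orthogonality computation in a different guise. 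Your route buys the full set equality in one stroke (in particular the inclusion $\supseteq$, needed to know that $-\Delta u-\Ch(u)u$ actually belongs to the subdifferential, which the paper does not spell out) and avoids the $o(t)$ expansion along spherical perturbations, at the price of quoting a classical qualification theorem rather than staying self-contained. You are also right, and slightly more careful than the written proof, on two small points: the admissibility $\mu=L-\Ch(u)\geq 0$ requires $L\geq M$, which is the standing choice in the paper, and the element you compute is the minimal one in $\partial_F\tfrac12\Phi_L(u)$, which resolves the paper's factor-of-$2$ discrepancy between the statement and the end of its proof.
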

\begin{proof} To simplify notation, we prove \eqref{eq:first_subdifferential} in the case when $L=0$, so that
	$\Phi_0=\Phi$ (with no loss of generality, since $L\|\cdot\|^2$ is a smooth perturbation); in this case, the convexity of $\Phi$ grants
	the identity $\partial_F\Phi=\partial\Phi$, so we need only to prove that $\partial\Phi(u)=\partial\Ch(u)$. Since $\Phi\geq\Ch$,
	the inclusion $\supset$ is obvious, since $\Phi(u)=\Ch(u)$ by assumption. Conversely, if $\xi\in\partial\Phi(u)$ and
	$v\in D(\Ch)$, one has
	$$
	\Phi(u+tv)\geq \Phi(u)+t\langle \xi ,v \rangle=\Ch(u)+t\langle\xi, v\rangle.
	$$
	Since $\|u\|<1$ and $\Ch(u)<M$, for $t>0$ sufficiently small one has $u+tv\in E^{1,M}$, and then
	$$\Ch(u+tv)\geq \Ch(u)+t\int\xi v d\meas.$$
	By monotonicity of difference quotients, since $v\in D(\Ch)$ is arbitrary, one then obtains that $\xi\in\partial\Ch(u)$.
	
	To prove the second statement, we first claim the following: If $u \in S$ with $\Ch(u)< M$ and $\zeta \in \partial \Phi(u)$, then also
	\[
	\zeta - \langle\zeta,u\rangle u + 2 u \Ch(u) \in \partial \Phi(u) \cap \partial \Ch(u).
	\]
	This is a consequence of the $2$-homogeneity of the Cheeger energy. 
	Indeed, let $w \in L^2(\meas) \cap D(\Ch)$. Then 
	\[
	\begin{split}
	\Ch(u + t w) 
	&= \| u + t w\|^2 \Ch( \frac{u + tw }{\|u + tw\|} ) \\
	&\geq (1 + 2t \langle u,w\rangle )\left(\Ch(u) + \int \zeta \left( \frac{u + t w}{\|u + t w\|} - u \right)d \meas \right) + o (t) \\
	&= \Ch(u) + 2t\langle u,w\rangle\Ch(u) + t \int \zeta ( w- \langle w,u\rangle u)d \meas + o(t) \\
	& = \Ch(u) + 2 t\langle u,w\rangle\Ch(u) + t \int (\zeta - \langle\zeta, u\rangle u )w d \meas + o(t),
	\end{split}
	\]
	where in the second line we used the assumption that $\zeta \in \partial \Phi(u)$. 
	It follows that 
	\[
	\xi := \zeta - \langle\zeta,u\rangle u + 2 u \Ch(u) \in \partial \Phi(u) \cap \partial \Ch(u),
	\]
	proving the claim. From the $2$-homogeneity of $\Ch$, considering variations
	$u_\epsilon=(1-\epsilon)u$ with $\epsilon\to 0^+$, it also follows that 
	\[
	\langle \zeta ,u\rangle \geq 2 \Ch(u)
	\]
	so that 
	\[
	\zeta = \xi + 2 \mu u\quad\text{with}\quad\mu:=\frac 12\langle\zeta,u\rangle-\Ch(u)\geq 0.
	\]
	Therefore,
	\[
	\partial_F \Phi_L(u) = \{ \xi + 2(\mu - L) u : \ \xi \in \partial \Ch(u), \mu \geq 0 \}.
	\]
	
	Recall that $-2 \Delta u$ is the element of minimal norm in $\partial \Ch(u)$ and that for every $\xi \in \partial \Ch(u)$ it holds that
	\[
	\int \xi u d \meas = 2 \Ch(u).
	\]
	As a consequence,
	\[
	\partial_F \Phi_L(u) = \{ \xi - 2 \Ch(u) u + 2(\mu + \Ch(u) - L) u : \ \xi \in \partial \Ch(u),\,\, \mu \geq 0 \}
	\]
	where 
	\[
	\xi - 2 \Ch(u) u \perp u.
	\]
	We conclude that the element of minimal norm in $\partial \Phi_L(u)$ is given by
	\[
	- 2 \Delta u - 2 u \Ch(u). 
	\]
	Therefore, if the descending slope $|\partial \Phi_L|(u)$ vanishes, one has
	$ -\Delta u = u \Ch(u)$.
\end{proof}

\begin{theorem}\label{thm:goodsphere} If $L>M$ the semigroup ${\sf S}_t$ leaves $S\cap E^{1,M}$ invariant, namely
${\sf S}_tu\in S\cap E^{1,M}$ whenever $u\in S\cap E^{1,M}$. In addition, for all $t>0$, ${\sf S}_t$ maps $S\cap E^{1,M}$ 
continuously to $H^{1,2}(\meas)$:  more precisely, if $f_i\in S \to f$ in $L^2(\meas)$, then 
\[
\|{\sf S}_t f_i- {\sf S}_tf\|\to 0\qquad\text{and}\qquad\Ch( {\sf S}_t f_i ) \to \Ch( {\sf S}_tf ).
\]
\end{theorem}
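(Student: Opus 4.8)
The goal decomposes into three parts: (i) the invariance ${\sf S}_t(S\cap E^{1,M})\subset S\cap E^{1,M}$; (ii) the $L^2$-continuity of $f\mapsto{\sf S}_tf$; and (iii) the continuity of $f\mapsto\Ch({\sf S}_tf)$ — recall that $H^{1,2}$-convergence here means precisely $L^2$-convergence together with convergence of the Cheeger energies, so (ii)+(iii) is exactly the second assertion. Throughout I use that $\tfrac12\Phi_L$ is $\lambda$-convex with $\lambda=-L$ (since $\Phi_L$ is $(-2L)$-convex), so that its gradient semigroup ${\sf S}_t$ obeys the contractivity estimate \eqref{eq:contractivity} and the energy-regularization estimate \eqref{eq:regularizing1} with this value of $\lambda$. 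I also tacitly restrict to $f_i\in S\cap E^{1,M}$ (the finiteness domain of $\Phi_L$, hence the domain of ${\sf S}_t$); note that $f\in S\cap E^{1,M}$ as well, by $L^2$-lower semicontinuity of $\Ch$.

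\textbf{Invariance.} Fix $u\in S\cap E^{1,M}$ and put $u(t)={\sf S}_tu$, $g(t)=\|u(t)\|^2$. Since $\tfrac12\Phi_L(u)=\tfrac12(\Ch(u)-L)<\infty$ and $t\mapsto\tfrac12\Phi_L(u(t))$ is nonincreasing, one has $u(t)\in\{\Phi_L<\infty\}=E^{1,M}\subset\overline B$ for all $t$; hence $g\le1$, $g(0)=1$, and $g$ is locally absolutely continuous on $[0,\infty)$ (using $\Phi_L(u)<\infty$ and the energy identity). Moreover $\Phi_L(u(t))=\Ch(u(t))-Lg(t)\le M-L$, so on the open set $\{g<1\}$ one gets $\Ch(u(t))\le M-L(1-g(t))<M$; therefore Lemma~\ref{lem:differentials} applies there and gives, for a.e.\ such $t$, that $-u'(t)=\tfrac12\eta_t-Lu(t)$ for some $\eta_t\in\partial\Ch(u(t))$. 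Combining this with $\int\eta_t u(t)\,d\meas=2\Ch(u(t))$, which follows from \eqref{eq:nicelaplacian}, yields
\[
\tfrac12 g'(t)=\langle u(t),u'(t)\rangle=Lg(t)-\Ch(u(t))\ge L-M>0\qquad\text{for a.e.\ }t\in\{g<1\}.
\]
If $\{g<1\}\neq\emptyset$, pick a connected component $(a,b)$: then $g(a)=1$ (either $a=0$, or $g(a)\ge1$ by maximality of the component and $g(a)\le1$ always), while $g'\ge2(L-M)$ a.e.\ on $(a,b)$, so the fundamental theorem of calculus forces $g(b)\ge1+2(L-M)(b-a)>1$, contradicting $g\le1$. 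Hence $\{g<1\}=\emptyset$, i.e.\ ${\sf S}_tu\in S$ for all $t$, which together with ${\sf S}_tu\in E^{1,M}$ proves (i).

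\textbf{$L^2$- and energy-continuity.} For (ii), contractivity \eqref{eq:contractivity} gives at once $\|{\sf S}_tf_i-{\sf S}_tf\|\le e^{Lt}\|f_i-f\|\to0$. For (iii), the bound $\liminf_i\Ch({\sf S}_tf_i)\ge\Ch({\sf S}_tf)$ is immediate from $L^2$-lower semicontinuity of $\Ch$ together with (ii). The reverse bound is the main obstacle, and I would obtain it from the semigroup property combined with the energy-regularization estimate \eqref{eq:regularizing1}. Set $u_i={\sf S}_{t/2}f_i$ and $u={\sf S}_{t/2}f$, so that $u_i\to u$ in $L^2$ by (ii), ${\sf S}_tf_i={\sf S}_{t/2}u_i$, ${\sf S}_tf={\sf S}_{t/2}u$, and, by (i), all of $u_i,u,{\sf S}_tf_i,{\sf S}_tf$ lie on $S$. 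Applying \eqref{eq:regularizing1} for $\tfrac12\Phi_L$ with starting point $u_i$, time $t/2$, and test vector $v=u$, and using $\Phi_L=\Ch-L$ on $S$, one obtains
\[
\Ch({\sf S}_tf_i)\le\Ch({\sf S}_tf)+\frac{L\,\|u_i-u\|^2}{1-e^{-Lt/2}}\longrightarrow\Ch({\sf S}_tf),
\]
so $\limsup_i\Ch({\sf S}_tf_i)\le\Ch({\sf S}_tf)$, and with the $\liminf$ bound this gives $\Ch({\sf S}_tf_i)\to\Ch({\sf S}_tf)$, completing the proof. I expect the substantive work to be entirely in the invariance step — precisely the subdifferential identification of Lemma~\ref{lem:differentials} and the monotonicity of $\Phi_L$ along the flow, which are exactly what make the differential inequality $\tfrac12 g'\ge L-M$ available; once (i) is in hand, (ii) is free from contractivity and the $\limsup$ bound in (iii) is a one-line application of the Brezis-type regularization estimate with the target function ${\sf S}_{t/2}f$ as test point.
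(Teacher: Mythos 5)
Your treatment of the invariance and of the $L^2$-continuity is correct and essentially the paper's own argument: the identity $\frac{d}{dt}\tfrac12\|u(t)\|^2=L\|u(t)\|^2-\Ch(u(t))$ obtained from Lemma~\ref{lem:differentials} and \eqref{eq:nicelaplacian} is exactly the computation in the paper, and your observation that monotonicity of $\Phi_L$ along the flow forces $\Ch(u(t))<M$ wherever $\|u(t)\|<1$ is a clean way to bypass the paper's separate discussion of the level $\Ch(u(t))=M$; the contractivity argument for $\|{\sf S}_tf_i-{\sf S}_tf\|\to 0$ is also the paper's.

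The energy-convergence step, however, has a genuine gap. Applying \eqref{eq:regularizing1} to $\tfrac12\Phi_L$ with starting point $u_i={\sf S}_{t/2}f_i$, time $t/2$ and test point $v=u={\sf S}_{t/2}f$ yields $\Phi_L({\sf S}_tf_i)\le\Phi_L({\sf S}_{t/2}f)+L\|u_i-u\|^2/(1-e^{-Lt/2})$: the energy of the \emph{test point}, namely $\Ch({\sf S}_{t/2}f)$, appears on the right-hand side, not $\Ch({\sf S}_tf)$ as in your display. Since by invariance the flow stays on $S$ and $\Phi_L$ is nonincreasing along it, one has $\Ch({\sf S}_{t/2}f)\ge\Ch({\sf S}_tf)$, so your inequality is strictly stronger than what the cited estimate gives, and the argument as written only produces $\limsup_i\Ch({\sf S}_tf_i)\le\Ch({\sf S}_{t/2}f)$, which does not suffice. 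The step can be repaired in two ways. One is to run your comparison from an arbitrary earlier time: starting point ${\sf S}_sf_i$, test point ${\sf S}_sf$, time $t-s$, which gives $\limsup_i\Ch({\sf S}_tf_i)\le\Ch({\sf S}_sf)$ for every $s<t$, and then let $s\uparrow t$ using the continuity of $s\mapsto\Phi_L({\sf S}_sf)$ guaranteed by the energy identity (the initial energy is finite). The other is the paper's route: \eqref{eq:regularizing2} gives a bound $|\partial\Phi_L|({\sf S}_tf_i)\le C(t)$ uniform in $i$ (all data lie in $S\cap E^{1,M}$), and then the representation formula \eqref{eq:RepresentationLocalSlope} for the $(-2L)$-convex functional $\Phi_L$ yields $\Phi_L({\sf S}_tf_i)-\Phi_L({\sf S}_tf)\le C(t)\,\|{\sf S}_tf_i-{\sf S}_tf\|+L\|{\sf S}_tf_i-{\sf S}_tf\|^2\to 0$, exactly as in Lemma~\ref{le:LikePalaisSmale}; combined with lower semicontinuity this gives $\Ch({\sf S}_tf_i)\to\Ch({\sf S}_tf)$. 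Either patch is short, but the key display in your proposal does not follow from the estimate you invoke.
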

\begin{proof} Let $u\in S\cap E^{1,M}$ and set $u(t)={\sf S}_tu$; notice that $\Phi_L(u(t))<\infty$ for all $t>0$
implies $\Ch(u(t))\leq M$ for all $t\geq 0$; for $\Leb^1$-a.e. $t>0$ such that $\Ch(u(t))<M$ and 
$1\geq\|u(t)\|^2\geq M/L$ one has (using \eqref{eq:nicelaplacian} and Lemma~\ref{lem:differentials})
$$
\frac{d}{dt}\frac 12 \|u(t)\|^2=\int u(t)(\Delta u(t)+Lu(t)) d\meas\geq -M+L\|u(t)\|^2\geq 0.
$$
On the other hand, for $\Leb^1$-a.e. $t>0$ such that $\Ch(u(t))=M$ the monotonicity of $\Phi_L(u(t))$
still implies that $\frac{d}{dt} \|u(t)\|^2\geq 0$. Hence, $\frac{d}{dt} \|u(t)\|^2\geq 0$ for $\Leb^1$-a.e. $t>0$ such that 
$1\geq\|u(t)\|^2\geq M/L$. This immediately implies that $\|u(t)\|=1$ for all $t\geq 0$.

The continuity statement follows immediately from the contractivity of ${\sf S}$, the uniform upper bounds on
the slope \eqref{eq:regularizing2} and the representation formula \eqref{eq:RepresentationLocalSlope}.
(See also the proof of Lemma~\ref{le:LikePalaisSmale} below.)
\end{proof}

Motivated by the previous result, 
in the sequel we denote by $h^S_t$ the restriction of the semigroup ${\sf S}_t$ to $S\cap E^{1,M}$. 
 
\subsection{A Palais-Smale condition}

Under the compactness assumption on the sublevel sets $E^{1,M}$, the following compactness holds for
sublevel sets for the energy and slope of $\Phi$ (or, equivalently, of any of the $\Phi_L$, because of the
inequalities $|\partial\Phi|-2L\leq |\partial\Phi_L|\leq |\partial\Phi|+2L$), 
which plays the role of the celebrated Palais-Smale condition in our setting.

\begin{lemma}
\label{le:LikePalaisSmale}
Assume that the set $E^{1,M}$ in \eqref{eq:SublevelSetCheegerEnergy2} is compact in $L^2(\meas)$. Then, 
for every $T \geq 0$, the subset
\[
\mathcal{E}^{M,T} = \{ u \in S : \  \Phi(u) \leq M,\,\,  | \partial\Phi|(u) \leq T \}
\]
is compact in $L^2(\meas)$ and in $H^{1,2}(\meas)$.
More precisely, for every sequence of functions $(v_j) \subset \mathcal{E}^{M,T}$ there exist a subsequence 
(that we do not denote differently) and a function $v \in \mathcal{E}^{M,T}$ such that $v_j \to v$ in $L^2(\meas)$ and 
\[
\Ch(v_j) \to \Ch(v).
\]
\end{lemma}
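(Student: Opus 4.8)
The plan is to prove that $\mathcal{E}^{M,T}$ is compact in $L^2(\meas)$ by extracting from a sequence $(v_j)\subset\mathcal{E}^{M,T}$ a subsequence converging in $L^2(\meas)$ to some limit $v$, and then upgrading this to convergence of the energies $\Ch(v_j)\to\Ch(v)$ with $v\in\mathcal{E}^{M,T}$. The $L^2$-convergence is immediate: since $\Ch(v_j)\le M$ and $\|v_j\|=1$, the sequence lies in $E^{1,M}$, which is compact in $L^2(\meas)$ by hypothesis, so a subsequence converges to some $v\in E^{1,M}$; moreover $\|v\|=1$ since $S$ is closed in $L^2(\meas)$, so $v\in S$. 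By lower semicontinuity of $\Ch$ (and hence of $\Phi$), $\Ch(v)\le M$; and by lower semicontinuity of the descending slope of the $(-2L)$-convex functional $\Phi_L$ (quoted from \cite[Cor.~2.4.10]{Ambrosio-Gigli-Savare-05}, together with the two-sided comparison $|\partial\Phi|-2L\le|\partial\Phi_L|\le|\partial\Phi|+2L$), we get $|\partial\Phi|(v)\le T$. Hence $v\in\mathcal{E}^{M,T}$.

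\textbf{The main obstacle} is to prove the convergence $\Ch(v_j)\to\Ch(v)$, i.e.\ that the energies do not drop in the limit; lower semicontinuity gives only $\Ch(v)\le\liminf_j\Ch(v_j)$, so the work is to produce a matching upper bound for $\limsup_j\Ch(v_j)$. The idea is to exploit the regularizing property of the gradient flow $h^S_t={\sf S}_t$ on the sphere. Run each $v_j$ through the flow for a short time $t$ and use, on the one hand, the slope-regularization estimate \eqref{eq:regularizing2} with $\lambda=-2L$: since $|\partial\Phi_L|(v_j)$ is uniformly bounded (by $T+2L$) and $\|v_j-u(s)\|$ is controlled along the flow, we obtain that $|\partial\Phi_L|(h^S_t v_j)$ is bounded uniformly in $j$, and combined with the energy monotonicity $\Ch(h^S_t v_j)\le\Ch(v_j)\le M$ this means $h^S_t v_j\in\mathcal{E}^{M,T'}$ for a suitable $T'=T'(t)$. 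On the other hand, the energy-identity (5) of the gradient-flow theorem, applied to $\Phi_L$, gives
\[
\Phi_L(v_j)-\Phi_L(h^S_t v_j)=\int_0^t|u_j'(s)|^2\,ds
\]
where $u_j(s)={\sf S}_s v_j$; using the monotonicity of $s\mapsto e^{2\lambda^- s}|\partial\Phi_L|^2(u_j(s))$ this is bounded above by $C(t)\,|\partial\Phi_L|^2(v_j)\le C(t)(T+2L)^2$, uniformly in $j$. Thus $\Ch(v_j)-\Ch(h^S_t v_j)=\Phi_L(v_j)-\Phi_L(h^S_t v_j)$ (the $\|\cdot\|^2$ terms cancel since $\|v_j\|=\|h^S_t v_j\|=1$ by \thmref{thm:goodsphere}) is small, uniformly in $j$, when $t$ is small.

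\textbf{Concluding the argument:} by the continuity statement in \thmref{thm:goodsphere}, for each fixed $t>0$ we have $h^S_t v_j\to h^S_t v$ in $L^2(\meas)$ with $\Ch(h^S_t v_j)\to\Ch(h^S_t v)$. Therefore, for each fixed small $t$,
\[
\limsup_{j\to\infty}\Ch(v_j)\le\limsup_{j\to\infty}\Bigl(\Ch(h^S_t v_j)+\omega(t)\Bigr)=\Ch(h^S_t v)+\omega(t)\le\Ch(v)+\omega(t),
\]
where $\omega(t)\to 0$ as $t\to 0^+$ is the uniform bound for $\Ch(v_j)-\Ch(h^S_t v_j)$ obtained above, and the last inequality is energy monotonicity $\Ch(h^S_t v)\le\Ch(v)$. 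Letting $t\to 0^+$ yields $\limsup_j\Ch(v_j)\le\Ch(v)$, which together with lower semicontinuity gives $\Ch(v_j)\to\Ch(v)$. This is exactly convergence in $H^{1,2}(\meas)$, so $\mathcal{E}^{M,T}$ is compact in $H^{1,2}(\meas)$ as well, completing the proof.
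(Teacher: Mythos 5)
Your proof is correct, but it takes a genuinely different and noticeably heavier route than the paper. The paper's argument for the key upper bound is a direct two-line application of the representation formula \eqref{eq:RepresentationLocalSlope}: since $\Phi$ is convex (as $\Ch$ restricted to the closed convex set $E^{1,M}$) and $|\partial\Phi|(v_j)\le T$, one gets $\Phi(v_j)-\Phi(v)\le T\,\|v_j-v\|\to 0$, hence $\limsup_j\Ch(v_j)\le\Ch(v)$, which combined with lower semicontinuity finishes the proof --- no gradient flow is needed. You instead flow each $v_j$ for a short time, control the dissipated energy uniformly in $j$ via the energy identity and the monotonicity of the (exponentially weighted) slope, and then invoke the continuity statement of \thmref{thm:goodsphere} together with energy monotonicity to close the estimate. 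This is sound: the dissipation bound $\Ch(v_j)-\Ch(h^S_t v_j)\le\omega(t)$ with $\omega(t)\to 0$ uniformly in $j$ follows exactly as you say (up to harmless factors of $\tfrac12$ coming from the flow being that of $\tfrac12\Phi_L$), the cancellation of the $L\|\cdot\|^2$ terms uses the sphere invariance from \thmref{thm:goodsphere}, and since that theorem is stated and proved before the lemma your use of it is not circular. It is worth noting, however, that the continuity part of \thmref{thm:goodsphere} is itself established by the very same slope-plus-representation-formula mechanism, so your detour through the flow essentially re-derives, after time $t$, what a direct application of \eqref{eq:RepresentationLocalSlope} to $v_j$ at time $0$ gives immediately; the paper's proof is shorter and more self-contained, while yours trades that for reliance on the flow machinery without gaining additional generality.
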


\begin{proof}
Let $(v_j) \subset \mathcal{E}^{M,T}$ be a sequence of functions. 
By the assumed compactness of $E^{1,M}$, this sequence has an $L^2(\meas)$-strongly converging subsequence (that we will not denote differently) $v_j \to v$, for some $v \in S$. By the lower semicontinuity of $\Ch$ and of the local slope, 
it follows that $v \in \mathcal{E}^{M,T}$.
 
On the other hand, it follows by the representation \eqref{eq:RepresentationLocalSlope} of the local slope that
\[
\Phi(v_j) - \Phi(v) \leq T \|v_j-v\|.
\]
Consequently, recalling also the definition of $\Phi$, one has also
\[
\limsup_{j \to \infty} \Ch(v_j) \leq \Ch(v).
\]
By the lower semicontinuity of the Cheeger energy, we conclude.
\end{proof}

\subsection{Existence of eigenvalues}

For $\lambda \geq 0$, set $M=\lambda+1$, fix $L>M$ and 
denote by $K_\lambda$ the set of critical points at energy level $\lambda$ of $\Phi_L=\Phi-L\|\cdot\|^2$, namely
\[
K_\lambda := \{ u \in S :\ \ \Phi(u) = \lambda, \,\, |\partial\Phi_L|(u) = 0\}.
\]
The set $K_\lambda$ is compact by Lemma~\ref{le:LikePalaisSmale} and recall that Lemma~\ref{lem:differentials}
shows that $-\Delta u=u\Ch(u)$ for all $u\in K_\lambda$.

For $r> 0$ we define the tubular neighborhoods of $K_\lambda$
\begin{equation}
\label{eq:DefinitionU-nbh}
U_{\lambda,r} := \{ v \in S:\ \|u-v\| < r \text{ for some } u \in K_\lambda \}.
\end{equation}
Note that by compactness of $K_\lambda$, for every neighborhood $N$ of $K_\lambda$, there 
exists $r>0$ such that $U_{\lambda, r} \subset N$.

Moreover, define the sets (not necessarily neighborhoods) 
\begin{equation}
\label{eq:DefinitionN-set}
N_{\lambda,\delta} := \{ u \in S:\ \ |\Phi(u) - \lambda| \leq \delta, \,\, |\partial\Phi_L|^2(u) \leq 4\delta \}.
\end{equation}
Note that, thanks to Lemma~\ref{le:LikePalaisSmale} and to the lower semicontinuity of the slope, 
for every neighborhood $U$ of $K_\lambda$, there exists 
$\delta > 0$ such that $N_{\lambda, \delta} \subset U$, and we will apply this property to the sets $U_{\lambda,r}$.

We recall that
\[
\lambda_k(\Ch) = 
\inf_{V \in \mathcal{F}_k(L^2(\meas)) } \sup_{u \in V}\Ch (u). 
\]

The next theorem states that $\lambda_k(\Ch)$ is an eigenvalue for every $k \geq 1$, that is there exists
$u \in S$ such that
\begin{equation}\label{eq:calloo}
- \Delta u = \lambda_k(\Ch) u.
\end{equation}
Additionally, it gives a statement about the multiplicity of such eigenvalues. In the proof we will use the restriction $h_t^S$ 
to the sphere $S$ of the gradient flow ${\sf S}_t$ of $\Phi_L$.  

\begin{theorem}
\label{th:EigenvalueExistence}
Assume that the sublevel sets $E^{1,M}$ of the Cheeger energy, as defined in 
\eqref{eq:SublevelSetCheegerEnergy2}, are compact. If for some $k,\,\ell\geq 1$ it holds that
\[
\lambda_k(\Ch) = \lambda_{k+1}(\Ch) = \cdots = \lambda_{k + \ell -1}(\Ch) = \lambda
\]
then $\gamma(K_\lambda) \geq\ell$. In particular, $K_\lambda\neq\emptyset$ and therefore 
there exists $u \in S$ such that \eqref{eq:calloo} holds.
Finally, if $\ell>1$ there are infinitely many solutions $u$ to this equation.
\end{theorem}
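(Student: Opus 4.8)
The plan is to run a deformation/minimax argument of Lusternik--Schnirelmann type, using the gradient flow $h^S_t$ of $\Phi_L$ on $S\cap E^{1,M}$ in place of the usual pseudo-gradient flow. The key structural facts already in hand are: (i) by Theorem~\ref{thm:goodsphere} the flow $h^S_t$ stays on $S\cap E^{1,M}$ and is continuous (in fact $L^2$-to-$H^{1,2}$ continuous) for each $t>0$; (ii) by Lemma~\ref{le:LikePalaisSmale} the Palais--Smale-type set $\mathcal{E}^{M,T}$ is $L^2$-compact, and in particular $K_\lambda$ is compact; (iii) by the energy identity and monotonicity from the gradient-flow theorem (items (1) and (5)), $t\mapsto\Phi(h^S_t u)$ is nonincreasing and strictly decreasing whenever the slope is positive; (iv) the genus has the subadditivity and deformation-monotonicity properties of Proposition~\ref{pr:PropertiesGenus}, in particular $\gamma(V)\le\gamma(\overline{h(V)})$ for odd continuous $h$. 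Note that since $\Phi_L$ is even (because $\Ch$ is $2$-homogeneous and hence even, and $E^{1,M}$ is symmetric), the semigroup $h^S_t$ is odd, so it preserves symmetric sets and the genus is not increased under it.

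First I would prove the main \emph{deformation lemma}: if $\gamma(K_\lambda)<\ell$ (in particular if $K_\lambda=\emptyset$, where $\gamma(\emptyset)=0$), then there exist $\varepsilon>0$, a time $t_0>0$ and an odd continuous map $\eta=h^S_{t_0}$ (possibly composed with an excision of a neighborhood of $K_\lambda$) such that for every symmetric compact $V\subset S$ with $\sup_V\Ch\le\lambda+\varepsilon$ and $\gamma(V)\ge k$, one gets $\gamma(\eta(V\setminus N))\ge k-\gamma(\overline N)\ge k-(\ell-1)$... — more precisely, one excises a small tubular neighborhood $U_{\lambda,r}$ of $K_\lambda$ with $\gamma(\overline{U_{\lambda,r}})=\gamma(K_\lambda)\le \ell-1$ (possible by Proposition~\ref{pr:PropertiesGenus}(5)), flows the complement $V\setminus U_{\lambda,r}$ down below level $\lambda-\varepsilon$ using the gradient flow, and concludes $\gamma(\eta(V\setminus U_{\lambda,r}))\ge \gamma(V)-\gamma(\overline{U_{\lambda,r}})\ge k-(\ell-1)\ge 1$ via Proposition~\ref{pr:PropertiesGenus}(2),(3),(4). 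The point that the flow pushes $V\setminus U_{\lambda,r}$ strictly below $\lambda$ in \emph{uniform} time is where the Palais--Smale condition enters: on the compact set of points at energy $\le\lambda+\varepsilon$ and at distance $\ge r/2$ (say) from $K_\lambda$, the slope $|\partial\Phi_L|$ is bounded below by some $c>0$ (else Lemma~\ref{le:LikePalaisSmale} would produce a critical point in the excised region, contradiction), so the energy identity $-\frac{d}{dt}\Phi(h^S_t u)=|\partial\Phi_L|^2$ (item (5), applied to $\Phi_L$ and then translated back to $\Phi$ via the smooth perturbation) forces a definite energy drop in bounded time; one must also check that orbits starting outside $U_{\lambda,r}$ do not re-enter a smaller neighborhood before dropping, using the $N_{\lambda,\delta}\subset U_{\lambda,r}$ trapping property and quantitative slope/contraction estimates. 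This is the main obstacle: making the deformation odd, continuous, and compatible with the genus excision simultaneously, while handling points that hover near $K_\lambda$ — the standard argument uses a cut-off pseudo-gradient, but here we must instead stop/restart the gradient flow, so the bookkeeping is the delicate part.

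Granting the deformation lemma, the theorem follows by the usual minimax contradiction. Suppose $\lambda_k=\cdots=\lambda_{k+\ell-1}=\lambda$ but $\gamma(K_\lambda)\le\ell-1$. Pick $\varepsilon>0$ from the deformation lemma and, by definition of $\lambda_{k+\ell-1}=\lambda$, choose $V\in\mathcal{F}_{k+\ell-1}(L^2(\meas))$ with $\sup_V\Ch\le\lambda+\varepsilon$. Applying the deformation lemma, $W:=\overline{\eta(V\setminus U_{\lambda,r})}$ is symmetric, compact, contained in $S$, with $\sup_W\Ch\le\lambda-\varepsilon$ (so in particular $W\in\mathcal{F}_{k}$ provided $\gamma(W)\ge k$), and $\gamma(W)\ge\gamma(V)-\gamma(\overline{U_{\lambda,r}})\ge (k+\ell-1)-(\ell-1)=k$. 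Hence $\lambda_k(\Ch)\le\sup_W\Ch\le\lambda-\varepsilon<\lambda=\lambda_k(\Ch)$, a contradiction. Therefore $\gamma(K_\lambda)\ge\ell$; since a set of positive genus is nonempty, $K_\lambda\ne\emptyset$, and by the last line of Lemma~\ref{lem:differentials} every $u\in K_\lambda$ satisfies $-\Delta u=\Ch(u)u=\lambda u$, giving \eqref{eq:calloo}. Finally, if $\ell>1$ then $\gamma(K_\lambda)\ge 2$, and a set of genus $\ge 2$ cannot be finite — any nonempty finite symmetric subset of $S$ not containing $0$ that is, after antipodal identification, a finite set of points has genus $1$ (map each antipodal pair to a distinct nonzero point of $\mathbb{R}$, extending oddly; cf. the argument already used in the proof of Corollary~\ref{cmain} showing $\{\pm 1\}$ has genus $1$) — so $K_\lambda$ is infinite, yielding infinitely many solutions of the eigenvalue equation.
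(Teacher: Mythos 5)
Your skeleton is the same as the paper's: use the gradient flow $h^S_t$ of $\Phi_L$ in place of a pseudo-gradient, excise a neighborhood of $K_\lambda$ of the same genus, invoke the Palais--Smale-type compactness of Lemma~\ref{le:LikePalaisSmale}, count genus by subadditivity, and read off the eigenvalue equation from Lemma~\ref{lem:differentials}; your genus bookkeeping and the ``finite symmetric sets have genus one'' argument at the end are fine. The problem is that the crux --- the deformation lemma itself --- is asserted rather than proved: you state that $h^S_{t_0}$ carries $V\setminus U_{\lambda,r}$ below level $\lambda-\varepsilon$, and you yourself flag the capture of trajectories near $K_\lambda$ as ``the main obstacle'' whose ``bookkeeping is the delicate part'' without resolving it, yet this is exactly the analytic content that distinguishes the nonsmooth setting. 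Two concrete defects: (i) the slope lower bound you invoke is claimed on $\{\Ch\le\lambda+\varepsilon\}\cap\{\dist(\cdot,K_\lambda)\ge r/2\}$, and on that set it is false in general, because it contains critical points at lower levels (the constants $\pm 1$, and eigenfunctions for $\lambda_j<\lambda$); the bound only holds on the two-sided band $\{|\Ch-\lambda|\le\varepsilon\}$ minus a neighborhood of $K_\lambda$, with $\varepsilon$ and the bound chosen jointly in terms of $r$ --- this is precisely what the sets $N_{\lambda,\delta}$ of \eqref{eq:DefinitionN-set} encode. (ii) To show an orbit starting outside $U_{\lambda,r}$ cannot reach a smaller neighborhood of $K_\lambda$ before its energy drops below $\lambda-\varepsilon$, you need a quantitative bound on how fast the orbit can cross the annulus $U_{\lambda,2r}\setminus U_{\lambda,r}$; the contraction estimate \eqref{eq:contractivity} does not provide this. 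The paper gets it from the slope-regularization bound \eqref{eq:regularizing2}: after replacing $V_0$ by $h^S_{1/2}(V_0)$ the slope, hence the metric speed, along the flow is bounded by a constant $J$, so crossing the annulus takes time at least $r/J$, during which the slope lower bound forces an energy drop exceeding the width of the band, a contradiction. (Alternatively one could use the energy identity plus Cauchy--Schwarz, $\Phi_L(u(s))-\Phi_L(u(s'))\ge \|u(s)-u(s')\|^2/(s'-s)$.) Neither ingredient appears in your proposal, so the deformation step remains a genuine gap.

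For comparison, the paper avoids the excision-form deformation you aim at: it proves the absorption statement \eqref{eq:calloo1}, $h^S_1(V_0)\subset E^{1,\lambda-\delta}\cup N$, and then concludes directly by subadditivity, $\gamma(\overline N)\ge\gamma(h^S_1(V_0))-\gamma(\overline{E^{1,\lambda-\delta}})\ge (k+\ell-1)-(k-1)=\ell$, with no argument by contradiction and, crucially, no need to show that points which do approach $K_\lambda$ descend below the level --- they are simply absorbed into $N$. Your excision variant is classical and can be completed with the same estimates ($J$ from \eqref{eq:regularizing2} and the $N_{\lambda,\delta}$ trapping sets), but it carries exactly the extra bookkeeping you left open; if you want to keep your route, supply the band-restricted slope bound and the annulus-crossing estimate explicitly.
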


\begin{proof}
Our proof is similar to the proof of Lemma 5.6 in \cite{Struwe-Variational-2008}, with some technical differences. 
While Struwe uses a pseudo-gradient flow, we may directly apply the gradient flow $h_t^S$.

By \eqref{eq:regularizing2} with $\lambda=-2L$ 
we obtain the existence of a constant $J>1$ such that for all 
$v \in S\cap E^{1,M}$ and all $t\in [1/2,2]$, one has
\begin{equation}
\label{eq:DefineJ}
|\partial\Phi_L|(h_t^S v) \leq J.
\end{equation}

There exists a symmetric, open neighborhood $N \supset K_\lambda$ such that 
$\gamma(\overline{N}) = \gamma(K_\lambda)$.
Moreover, there exist $r\in (0,1)$ and $\delta\in (0,1)$ such that
\[
K_\lambda\subset N_{\lambda,\delta J/r} \subset U_{\lambda, r} \subset U_{\lambda, 2 r} \subset N,
\]
where $U_{\lambda, r}$, and $N_{\lambda,\delta J/r}$ were defined in (\ref{eq:DefinitionU-nbh}) and (\ref{eq:DefinitionN-set}) respectively (this can be achieved first choosing $r$, and then choosing $\delta$).

Select a set $V_0 \subset \mathcal{F}_{k+\ell-1}(L^2(\meas))$, such that 
\[
\sup_{u \in V_0} \Ch(v) < \lambda + \delta.
\]	
Without loss of generality we may assume that for every $t \in [0,1]$, $h_t^S(V_0) \subset \mathcal{E}^{\lambda + \delta, J}$, otherwise we can just replace $V_0$ by $h_{1/2}^S(V_0)$, and use (\ref{eq:DefineJ}) (notice that continuity and 1-homogeneity
of $h_{1/2}^S$ ensure that still $h_{1/2}^S(V_0)\in\mathcal{F}_{k+\ell-1}(L^2(\meas))$).

We now claim that
\begin{equation}\label{eq:calloo1}
h_1^S(V_0) \subset E^{1,\lambda - \delta} \cup N.
\end{equation}

Indeed, assume by contradiction that $h_1^Sv\notin N$ for some $v\in V_0$, with $\Ch(h_1^S v)>\lambda-\delta$.
Then, by monotonicity, one has $\Ch(h_t^Sv)>\lambda-\delta$ for all $t\in [0,1]$ and then $h_t^Sv\in N_{\lambda,\delta J/r}$
if and only if $|\partial\Phi_L|^2(h_t^Sv)|\leq 4\delta J/r$. Now, if for all $t \in [0,1]$ one has $h_t^S v \notin N_{\lambda,\delta J/r}$, 
energy dissipation gives
\[
\Ch(v) - \Ch(h_1^Sv) \geq 4\delta J / r >  4\delta.
\] 
If there exists $t \in [0,1]$ such that $h_t^Sv \in N_{\lambda,\delta J/r}$, then since $h_1^Sv\notin N$ we must have
(since the velocity of the curve is at most $J$) 
$$
\Leb^1(\{ s \in (t,1) :\ h_s^Sv \in U_{\lambda,2r} \setminus U_{\lambda, r} \})\geq r/J.
$$
Hence, using once more the fact that $ h_s^Sv \notin U_{\lambda, r}$ implies
$|\partial\Phi_L|^2(h_s^Sv)>4\delta J/r$, we get
\[
\begin{split}
\Ch(v) - \Ch(h_1^Sv) 
&\geq 4\delta (J/r) \mathcal{L}^1(\{ s \in (t,1) : \ h_s^Sv \in U_{\lambda,2r} \setminus U_{\lambda, r} \}) \\
&\geq 4\delta (J/r) (J/r)^{-1} = 4\delta.
\end{split}
\]
This proves the claim.

By the very definition of $\lambda_k(\Ch)$, one has 
\[
\gamma(\overline{E^{\lambda - \delta} }) < k.
\]
Hence, combining this inequality with \eqref{eq:calloo1} and
using the subadditivity of the genus stated in Proposition~\ref{pr:PropertiesGenus}, we find
\[
\begin{split}
\gamma(\overline{N}) &\geq \gamma(\overline{E^{\lambda - \delta} \cup N})
- \gamma(\overline{E^{\lambda - \delta}})\\
&> \gamma(h_1^S(V_0)) - k \geq \gamma(V_0) - k \\
&\geq k + \ell - 1 - k = \ell- 1
\end{split}
\]
so that 
\[
\gamma(K_\lambda) = \gamma(\overline{N}) \geq\ell.
\]
It follows that $K_\lambda \neq \emptyset$. 
Finally, $K_\lambda$ is a finite set if and only if $\gamma(K_\lambda) \leq 1$. 
\end{proof}

%
%

\end{document}